\newtheorem{prop}{Proposition}[section]
\newtheorem{thm}{Theorem}[section]
\newtheorem{Def}{Definition}[section]
\newtheorem{lem}{Lemma}[section]
\newtheorem{remark}{Remark}
\newtheorem{cor}{Corollary}[section]
\newcommand{\pa}{\partial}
\def\beq{\begin{eqnarray}}
\def\eeq{\end{eqnarray}}
\newcommand{\nn}{\nonumber}
\newcommand{\RN}[1]{%
  \textup{\uppercase\expandafter{\romannumeral#1}}%
}
\def\beq{\begin{eqnarray}}
\def\eeq{\end{eqnarray}}
\newcommand{\Rea}{\operatorname{Re}} 
\numberwithin{equation}{section}
\newcommand{\Ima}{\operatorname{Im}}
        \definecolor{pink}{rgb}{1,0,1}
        \definecolor{purple}{rgb}{0.4,0.2,1}
\newcommand{\supp}{\operatorname{Supp}}
\newcommand{\Vol}{\operatorname{Vol}}
\newcommand{\eps}{\varepsilon}
\newcommand{\N}{\mathbb{N}}
\newcommand{\R}{\mathbb{R}}
\newcommand{\C}{\mathbb{C}}
\newcommand{\cC}{\mathcal{C}}
\newcommand{\cL}{\mathcal{L}}
\title{The Laplace spectrum on conformally compact manifolds}
\author{Nelia Charalambous} 
\author{Julie Rowlett}
\address{Nelia Charalambous,  Department of Mathematics and Statistics, University of Cyprus, 
P.O. Box 20537, 
CY-1678 Nicosia, 
Cyprus} 
\urladdr{\href{http://pythagoras.mas.ucy.ac.cy/nelia/index.html}{http://pythagoras.mas.ucy.ac.cy/nelia/index.html}}
\email{\href{mailto:charalambous.nelia@ucy.ac.cy}{charalambous.nelia@ucy.ac.cy}}
\address{Julie Rowlett,  Mathematical Sciences, Chalmers University of Technology and University of Gothenburg,  SE-412 96, Gothenburg, Sweden} 
\urladdr{\href{http://www.math.chalmers.se/~rowlett}{http://www.math.chalmers.se/~rowlett}}
\email{\href{mailto:julie.rowlett@chalmers.se}{julie.rowlett@chalmers.se}}
\date{June 2023}
\begin{document}

\maketitle

\begin{abstract} We consider the spectrum of the Laplace operator acting on $\cL^p$ over a conformally compact manifold for $1 \leq p \leq \infty$.  We prove that for $p \neq 2$ this spectrum always contains an open region of the complex plane.  We further show that the spectrum is contained within a certain parabolic region of the complex plane. These regions depend on the value of $p$, the dimension of the manifold, and the values of the sectional curvatures approaching the boundary.  
\end{abstract} 


\section{Introduction} \label{s:intro}
Conformally compact manifolds were introduced by Sir Roger Penrose in the 1960s \cites{penrose_63, penrose_65} as a tool to investigate asymptotic properties of radiative fields in general relativity.  A conformally compact manifold is a complete, smooth topological manifold with smooth boundary such that the Riemannian metric, known as a conformally compact metric, induces a unique conformal class of Riemannian metrics on the boundary (at infinity).  Examples include conformal compactifications of Einstein metrics.  As such, conformally compact manifolds provide a means to study the asymptotic behavior of solutions to the vacuum Einstein equations at null infinity.  They have been extensively researched by mathematicians and physicists.  An exhaustive list of citations is not feasible, so we simply mention an example from physics and from mathematics.  The physicists Stephan Hawking \& Don Page \cite{hawking_83}  used conformally compact manifolds to study thermodynamics of black holes in anti de Sitter space.  The mathematicians Charles Fefferman and Robin Graham 
used conformally compact manifolds to produce a host of scalar curvature invariants for conformal Riemannian manifolds \cite{feff_gram_85}. 


We now give a more detailed description of a conformally compact manifold, $M$.  It begins with an $n+1$ dimensional smooth topological manifold with smooth $n$ dimensional boundary, denoted by $\pa M = Y$.  The boundary, $Y$, is then a smooth, closed (compact), $n$ dimensional topological manifold. We do not assume that $Y$ is connected, but we do assume that there are finitely many boundary components.  We denote the closure of $M$ by $\overline M = M \cup \pa M$.  A \em boundary defining function \em is a smooth function $\rho: \overline{M} \to [0, \infty)$ such that $\rho^{-1} \{ 0 \} = \pa M$.  Moreover, $\rho$ is required to vanish \em simply \em at the boundary, so that $d\rho \neq 0$ on $\pa M$.  As in \cite{mazzeo_88}, we say that $(M,g)$ is \em conformally compact \em if the Riemannian metric satisfies
\[ g = \frac{\overline{g}}{\rho^2}.\]
Above, $\overline{g}$ is a Riemannian metric on $M$ which extends smoothly to a smooth Riemannian metric on $Y$.  Expressed in this form, the sectional curvature of the conformally compact manifold tends to $-|d\rho(p)|^2 = - \frac{1}{|\partial \rho(p)|^2}$ at a boundary point $p$. Since $\rho$ is smooth, the sectional curvature tends therefore to strictly negative but not necessarily identical values at the boundary. 

One of the most fundamental partial differential operators on a Riemannian manifold is the Laplace operator.  On a manifold of dimension $n+1$ with Riemannian metric $g$, the Riemannian metric  at each point can be expressed in local coordinates  as an $(n+1) \times (n+1)$ positive definite symmetric matrix, denoted by $(g_{ij})$.  The determinant of this matrix is denoted $\det(g)$ and its inverse at each point is denoted $(g^{ij})$.  The Laplace operator is then 
\[ \Delta = - \frac{1}{\sqrt{\det(g)}} \sum_{i,j=1} ^{n+1} \pa_i g^{ij} \sqrt{\det(g)} \pa_j.\]
 
A natural question for geometric operators such as the Laplacian, is to what extent do the geometric and topological properties of the manifold influence the analytical features  of the operator such as its spectrum.  To make this more precise, we recall the general definitions of spectrum and resolvent set for an operator $H$ on a Banach space $B$.  A point $\lambda \in \C$ belongs to the resolvent set of $H$ if $H-\lambda I$ has a bounded inverse on $B$, with $I$ the identity operator.  The complement of the resolvent set in $\C$ is the spectrum of the operator.

On a non-compact manifold, such as a conformally compact manifold, $M$, the Laplace operator is usually defined for smooth, compactly supported functions, $\cC^\infty _c (M)$.  It is then extended to a Hilbert space contained in $\cL^2(M)$.  The minimal domain is the closure of the graph of $\Delta$ on $\cC^\infty _c(M)$ with respect to the $\cL^2$ norm.  The maximal domain consists of all $f \in \cL^2$ such that the distribution $\Delta f$ can be represented by an $\cL^2$ function.  Since conformally compact manifolds are complete, the minimal and maximal domains are equal, so $\Delta$ is essentially self-adjoint \cites{roelcke_60,strichartz_83}.  

The spectrum of the Laplace operator on a conformally compact manifold $M$ acting on its canonical domain in $\cL^2(M)$ was first considered by Mazzeo \cite{mazzeo_88}.  The essential spectrum is $[n^2 \alpha_0^2/4, \infty)$, with $\alpha_0^2$ the minimum of $|d\rho(p)|^2$ over the boundary, for a boundary defining function $\rho$.  The essential spectrum is absolutely continuous, and there are no embedded eigenvalues except possibly at $n^2 \alpha_0^2/4$. There could also exist a finite set of eigenvalues of finite multiplicity in the interval $(0, n^2 \alpha_0^2/4)$.  We note that by the definition of a conformally compact manifold, $|d\rho|$ is independent of the choice of boundary defining function, and it is strictly positive and continuous on the boundary which is compact.  Hence $\alpha_0>0$.  The variability of the sectional curvatures at the boundary poses challenges to the analysis.  According to Mazzeo, ``the multiplicity of the continuous spectrum presumably varies in a complicated manner in $[n^2 \alpha_0 ^2/4, n^2 \alpha_1 ^2 /4]$, with $\alpha_1^2$ the maximum of $|d\rho|^2$ over the boundary.  Indeed, when the sectional curvatures tend to a negative constant at the boundary, Mazzeo and Melrose \cite{mm_87} had previously obtained the meromorphic continuation of the resolvent operator on such spaces, known as asymptotically hyperbolic manifolds.  Similarly, in \cite{emm_91} Epstein, Melrose, and Mendoza obtained such results for the resolvent of the Laplacian on strictly pseudoconvex domains.  Ten years later, in 2001, Borthwick developed the scattering theory of conformally compact manifolds. Following Mazzeo \cite{mazzeo_88}, Borthwick used the spectral parameter $s$ such that the relation to the eigenvalue $\lambda$ is 
\[ \lambda = \alpha_0 ^2 s (n-s).\]
Borthwick then implemented a microlocal construction to obtain the meromorphic continuation of the Schwartz kernel of the resolvent operator $(\Delta - \lambda I)^{-1}$ to the complex plane (for the parameter $s \in \C$), minus a certain collection of intervals.  The portion of the continuous spectrum inaccessible by meromorphic continuation indeed corresponds to the range previously mentioned by Mazzeo, $\lambda \in [n^2 \alpha_0 ^2/4, n^2 \alpha_1 ^2 /4]$ and is the result of the variability of the curvature at the boundary.  Related work include the analytic continuation of the resolvent on symmetric spaces of noncompact type \cite{mazzeo_vasy_05}.  S\'a Barreto and Wang \cite{sabarreto_wang_16} proved that if a conformally compact manifold is non-trapping, then the semiclassical scattering matrix is a semiclassical Fourier integral operator which quantizes the scattering relation. They also proved resolvent estimates and showed that there is a resonance-free region near the continuous spectrum.  The non-trapping assumption, that there are no closed geodesics on the manifold, is often assumed in order to obtain resolvent estimates.  

Here, we investigate the spectrum of the Laplace operator on a conformally compact manifold acting on the Banach spaces $\cL^p(M)$ for $1 \leq p \leq \infty$.  We allow the curvatures to be variable at the boundary, and we do not make any non-trapping assumptions.  Consequently, it is not immediately apparent that the methods of the aforementioned works would yield the results obtained here.  Since conformally compact manifolds are complete, there is a canonical domain for the Laplace operator acting on $\cL^p(M)$ for $p$ in this range. For the Laplacian on functions, using Davies' Theorems 1.3.2, 1.3.3, 1.4.1 in~\cite{davies89}, one can obtain that the heat operator $e^{-t\Delta}$ of the Laplacian operator on $\cL^2(M)\cap \cL^p(M)$ can be extended to a contraction semigroup on $\cL^p(M)$ for all $1 \leq p < \infty$. For $p=\infty$, the heat operator on $\cL^{\infty}(M)$ is defined as the dual of the operator on $\cL^1(M)$. This allows us to define the infinitesimal generator  of the semigroup $e^{-t \Delta}$ on $\cL^p(M)$ for all $1\leq p\leq\infty$. We refer to this generator as the Laplacian on $\cL^p(M)$. Note that $\cC^\infty _c (M)$ is also a core for this operator.

There is significant motivation to study the Laplace operator acting on these Banach spaces.  For example, the natural space to study heat diffusion is $\cL^1(M)$ because if a function $u(t, x) \geq 0$ is the heat distribution at time $t$, the total amount of heat in any region is given by the $\cL^1$ norm of $u$ over that region.  Consequently, the $\cL^1$ norms of solutions to the heat equation have a physical meaning.  This is sometimes known as the \em heat content. \em  Although this provides motivation for the study of the Laplace operator on $\cL^1$, this space is more difficult to handle than the Hilbert space $\cL^2$ or the reflexive $\cL^p$ spaces for $p>1$.  In particular, the heat semigroup is bounded analytic on $\cL^p$ for $p>1$, but this is not in general true for the heat semigroup on $\cL^1$.  

There are numerous results in the literature for $\cL^p$ spectral theory for differential operators on domains of euclidean space including but not limited to \cite{arendt94, davies95,davies89,davies97,kunstmann99}. Hempel and Voigt proved that the $\cL^p$ spectrum of the Laplacian and and a class of Schr\"odinger operators over euclidean space  is independent of $p$ for all $1\leq p\ \leq \infty$ \cite{HV1}. The $p$-indendence result can be generalized to various classes of elliptic and Schr\"odinger-type operators over manifolds, metric meausure spaces, as well as operators acting on other bundles by controlling the potential term of the operator, as well as assuming some form of subexponential growth for the volume of the manifold \cite{ChGr,ChLu2,Kordu,sturm93}.  In some cases knowledge of the $\cL^p$ spectra can be used to obtain information on the decay of $\cL^2$ eigenfunctions of the Laplacian as done in \cite{taylor89}. For example, to calculate the $\cL^2$ spectrum of the Laplacian on complete Riemannian manifolds with non-negative Ricci curvature, in \cite{wang97} J. Wang first calculated the $\cL^1$ spectrum and then used a result of Sturm \cite{sturm93} which showed that the $\cL^p$ spectrum does not depend on $p$.  

On the other hand there are various results which show that the $\cL^p$ spectrum of the Laplace-Beltrami operator on a Riemannian manifold may depend non-trivially on $p$ and is related to the volume growth of the manifold. For example, Davies, Simon and Taylor proved in \cite{DST} that the  $\cL^p$ spectrum of the Laplace-Beltrami operator on hyperbolic space does depend on $p$ and is a parabolic region of the complex place which collapses to a closed subset of the real line for $p=2$. In the same article they generalized their result to noncompact geometrically finite quotients of hyperbolic space, as well as quotients with finite volume or which are cusp-free. Taylor showed that the $\cL^p$ spectrum of the Laplacian is also a parabolic region whenever the manifold is a symmetric space of noncompact type \cite{taylor89}.  Ji and Weber considered the case of locally symmetric spaces of any rank providing further details about the nature of the spectrum and the   $\cL^p$ eigenvalues \cite{JW1,JW}.  In \cite{taylor89} Taylor also proved that the $\cL^p$ spectrum of certain functions of the Laplace operator (including the Laplacian) is contained in a parabolic region whenever the underlying space is a noncompact manifold with bounded geometry, injectivity radius uniformly bounded below,  and at most exponential volume growth \cite{taylor89}.

We see from the aforementioned results that the set of manifolds over which the $\cL^p$ spectrum of the Laplace operator on functions is $p$ dependent has been restricted thus far to quotients of hyperbolic space. In this article, we will see that conformally compact manifolds provide an ideal setting of much more general spaces where this is also true.  Our results show that the spectrum depends on $p$, the dimension of the manifold, and its sectional curvature near the conformal boundary. 

\begin{thm} \label{th1} 
Let $M$ be a conformally compact manifold of dimension $n+1$ with boundary defining function $\rho$.  Let  $\alpha_0^2$ and $\alpha_1^2$ be, respectively, the minimum and maximum of $|d\rho|^2$ on the boundary of $M$. Then, the spectrum of the Laplace operator acting on $\cL^1(M)$ contains the region 
\[ \left\{ x + i y : x \geq \frac{y^2}{n^2 \alpha_1^2}, \, y \in \R \right\} \subset \C. \] 
Moreover, the  $\cL^1$  spectrum is contained within the region 
\[ \left\{ x + i y : x \geq \frac{y^2}{n^2 \alpha_1^2} - \frac{n^2 \alpha_1^2}{4} +\lambda_1, \, y \in \R \right\}.\] 
Above, $\lambda_1$ is the bottom of the $\cL^2(M)$ spectrum.  This is either an isolated eigenvalue of finite multiplicity contained in $(0, n^2 \alpha_0^2/4)$, or if the $\cL^2(M)$ spectrum contains no isolated eigenvalues of finite multiplicity, then $\lambda_1=\frac{n^2  \alpha_0^2}{4}$.
\end{thm} 


If the manifold is not only conformally compact, but also asymptotically hyperbolic, and there are no isolated eigenvalues in the $\cL^2$ spectrum, then $\alpha_0=\alpha_1$, and we obtain the $\cL^1$ spectrum precisely. 

\begin{cor} \label{cor1}
Under the same hypotheses as Theorem \ref{th1}, assume further that $M$ is asymptotically hyperbolic in the sense that $|d\rho|^2$ is a positive constant on $\pa M$, and the $\cL^2(M)$ spectrum contains no isolated eigenvalues of finite multiplicity.  Then the Laplace operator acting on $\cL^1 (M)$ has spectrum equal to 
\[ \left\{ x + i y : x \geq \frac{y^2}{n^2 \alpha_1^2}, \, y \in \R \right\} \subset \C. \] 
Above $\alpha_1^2 = |d\rho|^2$ on the boundary of $M$. 
\end{cor}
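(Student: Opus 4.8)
The plan is to obtain this corollary as an immediate consequence of the two-sided bound already established in Theorem \ref{th1}, by observing that the asymptotic hyperbolicity hypothesis forces the inner and outer regions to coincide. The first thing I would check is that the new assumption pins down the two curvature constants: since $\alpha_0^2$ and $\alpha_1^2$ are defined as the minimum and maximum of $|d\rho|^2$ over $\pa M$, and we are now assuming that $|d\rho|^2$ is a positive constant on $\pa M$, both the minimum and the maximum must equal that common constant value. Hence $\alpha_0^2 = \alpha_1^2$, and both equal the prescribed constant $|d\rho|^2$.

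With this identity in hand, the correction term governing the outer containment in Theorem \ref{th1} vanishes, because $\tfrac{n^2(\alpha_1^2 - \alpha_0^2)}{4} = 0$. Consequently the region known to contain the $\cL^1$ spectrum,
\[ \left\{ x + iy : x \geq \frac{y^2}{n^2 \alpha_1^2} - \frac{n^2(\alpha_1^2 - \alpha_0^2)}{4}, \, y \in \R \right\}, \]
reduces to
\[ \left\{ x + iy : x \geq \frac{y^2}{n^2 \alpha_1^2}, \, y \in \R \right\}, \]
which is exactly the region that Theorem \ref{th1} asserts is \emph{contained in} the spectrum. The two inclusions then squeeze the $\cL^1$ spectrum between identical lower and upper regions, which forces equality with that common set.

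There is essentially no genuine obstacle to overcome here: the entire substance of the corollary is already packaged inside the two-sided estimate of Theorem \ref{th1}, and the asymptotically hyperbolic case is precisely the borderline situation in which the inner and outer parabolic regions meet. The only point requiring (trivial) verification is that the constancy of $|d\rho|^2$ on $\pa M$ forces $\alpha_0 = \alpha_1$, which is immediate from the definition of these constants as extrema of a function that has now been assumed constant. I would therefore present the proof as a short deduction, emphasizing the collapse of the vanishing correction term rather than introducing any new analytic machinery.
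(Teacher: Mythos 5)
Your proof is correct and takes exactly the paper's route: the paper itself treats this corollary as an immediate consequence of Theorem \ref{th1}, noting that asymptotic hyperbolicity forces $\alpha_0 = \alpha_1$, whence the correction term $\tfrac{n^2(\alpha_1^2-\alpha_0^2)}{4}$ vanishes and the inner and outer parabolic regions coincide, squeezing the $\cL^1$ spectrum to equal that common region.
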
 

A further consequence of our study of the $\cL^1$ spectrum is the exponential rate of volume growth of any conformally compact manifold.  For the precise definition, we refer to Definition \ref{def:volgrow} in \S \ref{s:resolvent}.  Heuristically, an exponential rate of volume growth equal to $\kappa$ means that the volume of a ball of radius $r$ is of the order $e^{\kappa r}$ as $r \to \infty$.   
\begin{cor} \label{cor2}
Under the same hypotheses as Theorem \ref{th1}, the exponential rate of volume growth of an $n+1$ dimensional conformally compact manifold is $n \alpha_1$. 
\end{cor}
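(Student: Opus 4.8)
The plan is to extract the exponential volume growth rate directly from the location of the $\cL^1$ spectrum determined in Theorem \ref{th1}, exploiting the general principle that the aperture of the parabolic $\cL^1$ spectrum is governed precisely by the exponential volume growth rate $\kappa$ of Definition \ref{def:volgrow}. The relationship I would establish (or invoke, in the spirit of \cite{taylor89,sturm93}) in \S \ref{s:resolvent} is the following: for any complete manifold, if $L(y)$ denotes the left-most point of the $\cL^1$ spectrum of $\Delta$ on the horizontal line $\{\Ima \lambda = y\}$, then
\[ \lim_{|y|\to\infty}\frac{L(y)}{y^2}=\frac{1}{\kappa^2}. \]
In words, the $\cL^1$ spectrum is asymptotic to the parabola $x=y^2/\kappa^2$, so that its single shape parameter recovers $\kappa$.

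With this in hand the corollary is a squeeze, and the one point requiring care is that Theorem \ref{th1} only brackets the spectrum between two parabolas rather than pinning it down exactly. Its first inclusion places $\{x\ge y^2/(n^2\alpha_1^2)\}$ inside the spectrum, whence $L(y)\le y^2/(n^2\alpha_1^2)$; its second confines the spectrum to $\{x\ge y^2/(n^2\alpha_1^2)-C\}$ with $C=n^2(\alpha_1^2-\alpha_0^2)/4$, whence $L(y)\ge y^2/(n^2\alpha_1^2)-C$. Thus
\[ \frac{y^2}{n^2\alpha_1^2}-C\;\le\;L(y)\;\le\;\frac{y^2}{n^2\alpha_1^2}. \]
Dividing by $y^2$ and sending $|y|\to\infty$, the additive constant $C$ is annihilated and $L(y)/y^2\to 1/(n^2\alpha_1^2)$. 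Comparing with the limit above gives $\kappa^2=n^2\alpha_1^2$, i.e. $\kappa=n\alpha_1$. Note it is precisely the passage $|y|\to\infty$ that makes the gap between the two parabolas irrelevant, so the non-sharpness of Theorem \ref{th1} in the general (not asymptotically hyperbolic) case costs nothing here.

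The substantive obstacle is therefore the general relationship itself, and I would prove its two halves separately. For $\kappa\ge n\alpha_1$ I would produce $\cL^1$ Weyl sequences from cut-offs of $e^{zr}$, where $r$ is the distance to a fixed basepoint and $\Re z$ is tuned to the critical $\cL^1$ decay rate $-\kappa$; since the radial term of the Laplacian tends to $\kappa$ one has $\Delta(e^{zr})\approx-(z^2+\kappa z)e^{zr}$, and the approximate eigenvalues $-(z^2+\kappa z)$ trace out the parabola $x=y^2/\kappa^2$, which together with the standard filling argument (as for hyperbolic space in \cite{DST}) exhibits $\{x\ge y^2/\kappa^2\}$ as spectrum. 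For the reverse bound $\kappa\le n\alpha_1$ I would estimate the resolvent $(\Delta-\lambda)^{-1}=\int_0^\infty e^{\lambda t}e^{-t\Delta}\,dt$ using heat-kernel/Davies--Gaffney bounds driven by the at-most-$e^{\kappa r}$ volume growth, showing it is bounded off the parabola and hence confining the spectrum there, exactly as in Taylor's treatment of exponential volume growth \cite{taylor89}. As an independent check that the answer is $n\alpha_1$ rather than, say, $n\alpha_0$, I would compute the growth geometrically: in the coordinates $g=\rho^{-2}\overline{g}$ the distance toward the conformal boundary behaves like $r\sim\alpha^{-1}|\log\rho|$ while the volume element is of order $\rho^{-(n+1)}$, so a large ball accrues volume fastest near the boundary point of maximal $|d\rho|=\alpha_1$ and its volume grows like $e^{n\alpha_1 r}$.
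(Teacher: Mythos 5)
Your argument, as organized, is circular. The squeeze consumes the \emph{second} inclusion of Theorem \ref{th1}, with the explicit constant $C=n^2(\alpha_1^2-\alpha_0^2)/4$. But that inclusion is not available before Corollary \ref{cor2} is proved: what the resolvent machinery (Theorem \ref{th:chlu1}, via Corollary \ref{cor:taylor}) gives a priori is the $\kappa$-dependent containment $\Spec_{\cL^1}(\Delta)\subset\{x+iy: x\geq y^2/\kappa^2-\kappa^2/4+n^2\alpha_0^2/4\}$, and the explicit constant in Theorem \ref{th1} only emerges after substituting $\kappa=n\alpha_1$ --- which is exactly the statement you are trying to prove. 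Once you replace the second inclusion by its honest $\kappa$-form, your ``divide by $y^2$ and let $|y|\to\infty$'' step collapses to precisely the paper's proof: comparing the region $\{x\geq y^2/(n^2\alpha_1^2)\}$ inside the spectrum (first part of Theorem \ref{th1}) with the $\kappa$-parabola containing the spectrum forces $1/\kappa^2\leq 1/(n^2\alpha_1^2)$, i.e.\ $\kappa\geq n\alpha_1$, while Proposition \ref{prop:vol_v2} supplies $\kappa\leq n\alpha_1$. (Note also that your two ``halves'' carry swapped labels: the filling/Weyl-sequence half, played against an upper inclusion, would yield $\kappa\leq n\alpha_1$, and the resolvent half, played against the lower inclusion, yields $\kappa\geq n\alpha_1$.)

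The second gap is the ``general principle'' itself, $\lim_{|y|\to\infty}L(y)/y^2=1/\kappa^2$ for any complete manifold. Only its containment half ($\liminf\geq 1/\kappa^2$) is a theorem, and even that requires geometric hypotheses (Ricci bounded below for Theorem \ref{th:chlu1}, bounded geometry for Taylor), not mere completeness. The filling half is not known in general, and your proposed proof of it fails: it rests on ``the radial term of the Laplacian tends to $\kappa$,'' but $\Delta r$ (the mean curvature of geodesic spheres) is a local, directional quantity, whereas $\kappa$ is a globally integrated growth rate. On a conformally compact manifold with variable curvature at infinity one has $\Delta r\to n\alpha(y)$, sweeping all of $[n\alpha_0,n\alpha_1]$ as the direction of escape varies, so a globally cut-off $e^{zr}$ is \emph{not} an approximate eigenfunction: the error term $(\Delta r-\kappa)z\,e^{zr}$ is comparable to the main term on most of each sphere. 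This is exactly why \S\ref{s:construction} localizes the quasimodes near boundary points where $\alpha^2\approx A$. One redeeming feature: your closing ``independent check'' is actually more than a check. Made rigorous using the normal form \eqref{eq:special_g} (distance $r\approx\alpha(y)^{-1}|\log x|$, volume element $\approx\sqrt{h}\,\alpha^{-1}x^{-n-1}\,dx\,dy$), it shows $\Vol(B_{p_0}(r))\gtrsim e^{n(\alpha_1-\delta)r}$ for every $\delta>0$, hence $\kappa\geq n\alpha_1$ by a purely geometric argument; combined with Proposition \ref{prop:vol_v2} this would prove the corollary with no spectral input at all, a genuinely different route from the paper's. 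But as written, that computation is offered only as a heuristic, and your primary argument stands on the two gaps above.
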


Our next result shows that a one-parameter family of parabolic regions are contained in the $\cL^p$ spectrum for  $1 \leq p \leq 2$.  We further determine a parabolic region that contains the $\cL^p$ spectrum. Note that for $q=\frac{p}{p-1} \geq 2$, the Laplacian on $\cL^q$ is the dual operator to the Laplacian on $\cL^p$ \cite[Theorem 1.4.1]{davies89}.  As a result the spectrum of the Laplace operator acting on $\cL^q(M)$ coincides (by duality) with the spectrum of Laplace operator acting on $\cL^p(M)$. This is also true for the Laplacian acting on $\cL^{\infty}(M)$ by its definition. Hence, our results for  $1 \leq  p \leq 2$ provide a comprehensive description for the $\cL^p$ spectrum of the Laplacian for all $1\leq p\leq \infty$.   

\begin{thm} \label{th2} 
Assume that $1 \leq p \leq2$.  The spectrum of the Laplace operator acting on $\cL^p(M)$ for a conformally compact manifold $M$ of dimension $n+1$ contains the region 
\[ \bigcup_{A \in |d\rho|^2(Y)} \left\{x+iy \in \C: x\geq \frac{y^2}{A n^2 (1-2/p)^2} + A \frac{n^2}{p} \left( 1 - \frac 1 p \right), \, y \in \R \right\}. \]
Above, $|d\rho|^2(Y)$ is the image of $|d\rho|^2$ over the boundary of $M$.  Note that for $p=2$ the region reduces to an interval in the real line.

Moreover,  the  $\cL^p(M)$  spectrum is contained within the parabolic region 
\[ \left \{ x + i y : x \geq \lambda_1 - \frac{ \left( \frac 2 p - 1 \right)^2 n^2 \alpha_1^2}{4}+ \frac{y^2}{n^2 \alpha_1^2 \left( \frac 2 p - 1\right)^2}  \, \, , y \in \R \right\} \]  
Above, $\lambda_1$ is the bottom of the $\cL^2(M)$ spectrum.  This is either an isolated eigenvalue of finite multiplicity contained in $(0, n^2 \alpha_0^2/4)$, or if the $\cL^2(M)$ spectrum contains no isolated eigenvalues of finite multiplicity, then $\lambda_1=\frac{n^2  \alpha_0^2}{4}$. Above $\alpha_0^2$ and $\alpha_1^2$ are respectively the minimum and maximum values of $|d\rho|^2$ over the boundary.  
The respective results for $\frac 1q=1-\tfrac{1}{p}$ with $q \geq 2$,  follow by duality. 
\end{thm}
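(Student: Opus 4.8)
The plan is to establish the two inclusions separately, in each case localizing near $\pa M$ and comparing with a constant–curvature model. In a collar neighborhood of a boundary point $y_0$ with $|d\rho(y_0)|^2=A$, the metric is asymptotically that of a space of constant sectional curvature $-A$, and there one has the exact relation $\Delta(\rho^s)=A\,s(n-s)\rho^s$; on the actual manifold this holds up to terms that are of lower order as $\rho\to0$. The second basic fact I would use is that, with respect to the volume measure of $g$, the function $\rho^s$ sits exactly at the threshold of $\cL^p$ integrability near $\pa M$ when $\re(s)=n/p$. Both the shape of the spectrum and its $p$–dependence will come from feeding these two facts into, respectively, a quasimode construction and a weighted resolvent estimate.

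For the first assertion I would fix $A\in|d\rho|^2(Y)$ and, for every $s$ with $\re(s)\in[n/2,n/p]$, build an approximate eigensequence $u_j=\rho^s\chi_j$, where the $\chi_j$ are logarithmic cutoffs concentrating toward $\pa M$ near a point where $|d\rho|^2=A$. Using $\Delta(\rho^s)=A\,s(n-s)\rho^s$ together with the integrability threshold, one checks that $\|u_j\|_p\to\infty$ while $\|(\Delta-A\,s(n-s))u_j\|_p$ stays of strictly lower order, since the commutator $[\Delta,\chi_j]\rho^s$ and the collar error are supported where the cutoff varies; hence $A\,s(n-s)\in\sigma_p(\Delta)$ by the Weyl criterion. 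A direct computation shows that the image of the strip $\re(s)\in[n/2,n/p]$ under $s\mapsto A\,s(n-s)$ is exactly the region $\{x+iy:x\ge \frac{y^2}{An^2(1-2/p)^2}+A\frac{n^2}{p}(1-\frac1p)\}$, and taking the union over $A\in|d\rho|^2(Y)$ gives the claimed lower bound; the degeneration to an interval at $p=2$ is the boundary case $\re(s)=n/2$.

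For the reverse inclusion I would show $(\Delta-\lambda)^{-1}$ is bounded on $\cL^p$ for every $\lambda=x+iy$ outside the stated parabola, by conjugating with a complex exponential weight. Let $\psi$ be a regularized distance function with $|\nabla\psi|\le1$ and set $\phi=(a+ib)\psi$; the operator $\Delta_\phi=e^{-\phi}\Delta e^{\phi}$ is a first–order perturbation of $\Delta$ in which the $\langle\Delta\psi\,u,u\rangle$ contribution cancels after integration by parts, and a numerical–range computation then gives $\re\langle\Delta_\phi u,u\rangle+\frac ba\,\im\langle\Delta_\phi u,u\rangle\ge\big(\tfrac{\alpha_0^2n^2}{4}-(a^2+b^2)\big)\|u\|_2^2$, using only the bottom $\alpha_0^2 n^2/4$ of the $\cL^2$ spectrum. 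Thus for $\lambda$ with $x+\frac ba y<\tfrac{\alpha_0^2n^2}{4}-(a^2+b^2)$ the weighted resolvent $e^{-\phi}(\Delta-\lambda)^{-1}e^{\phi}$ is bounded on $\cL^2$. The real part $a$ of the weight controls the transfer of this bound to $\cL^p$; the admissible size of $a$ is dictated by the exponential volume growth rate $n\alpha_1$ from Corollary \ref{cor2}, the sharp choice being $a=n\alpha_1|\tfrac1p-\tfrac12|$, so that $a^2=\tfrac{(2/p-1)^2n^2\alpha_1^2}{4}$. Optimizing the tilted half–plane condition over the imaginary weight $b$ (the extremum is at $b=-y/2a$) produces the envelope $x\ge\tfrac{\alpha_0^2n^2}{4}-a^2+\tfrac{y^2}{4a^2}$, which is precisely the parabola in the statement. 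The case $q=1-\tfrac1p\ge2$ follows from the duality already recorded before the theorem.

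The hard part will be the transfer step in the last paragraph: turning the family of weighted $\cL^2$ resolvent bounds into a single $\cL^p$ bound with the sharp constant $a=n\alpha_1|\tfrac1p-\tfrac12|$, rather than the weaker $a=n\alpha_1/p$ that a crude Schur estimate on $|G_\lambda|$ would give. This requires exploiting the oscillation encoded in the imaginary weight $b$ through a Stein–type interpolation in the weight parameter, and it requires the volume growth rate $n\alpha_1$ and the Green's–function decay to be controlled uniformly up to $\pa M$ even though $|d\rho|^2$ varies over $[\alpha_0^2,\alpha_1^2]$; keeping these errors of lower order near the boundary, so that the minimal curvature $\alpha_0$ governs the spectral gap while the maximal curvature $\alpha_1$ governs the growth, is the technical crux for both inclusions.
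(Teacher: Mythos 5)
Your argument for the first inclusion has a genuine gap at its central step. You claim that for \emph{every} $s$ with $\Rea(s)\in[n/2,n/p]$ the functions $u_j=\rho^s\chi_j$ form an approximate eigensequence in $\cL^p$, the cutoff error being ``of strictly lower order.'' This is true only on the critical line $\Rea(s)=n/p$, and fails for every $s$ with $\Rea(s)<n/p$, no matter how the cutoffs are chosen. Indeed, write $\sigma=\Rea(s)$ and $c=n-\sigma p>0$. In the coordinate $t=\log(1/x)$ the relevant measure is $e^{ct}\,dt$, and the irreducible commutator term (term $II$ in the paper's expansion \eqref{eq:laplace3}) is $(n-2s-1)\,x^{s+1}\chi_j'$, i.e.\ a nonzero constant multiple of $x^s\,\pa_t\widetilde{\chi_j}$; note $n-2s-1\neq 0$ since $\Rea(s)\geq n/2$. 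The weighted Hardy inequality
\[
\int_0^\infty |f(t)|^p e^{ct}\,dt \;\le\; \Bigl(\frac{p}{c}\Bigr)^{p} \int_0^\infty |f'(t)|^p e^{ct}\,dt ,
\qquad f \textrm{ compactly supported},
\]
(proved by integrating $\tfrac1c\tfrac{d}{dt}e^{ct}$ by parts and applying H\"older) then gives $\|(\Delta-As(n-s))u_j\|_p\gtrsim c\,\|u_j\|_p$ for \emph{any} cutoff, logarithmic or otherwise: the error is comparable to the norm and the Weyl criterion never applies. The threshold $\Rea(s)=n/p$ is exactly the degenerate case $c=0$ of this inequality, which is why the logarithmic divergence of $\|u_j\|_p$ against a bounded error works there and only there. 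There is also a structural reason to expect that no quasimode argument can reach the interior of the region when $p<2$: for the dual exponent $p'>2$ the interior points are genuine $\cL^{p'}$ eigenvalues, so for $p<2$ they should lie in the residual spectrum, where $\Delta-\lambda$ is bounded below. The paper fills the interior by a different mechanism that your proposal is missing: quasimodes are constructed only on the critical lines $\Rea(\lambda)=n/q$, measured in the $\cL^q$ norm, for each $q\in[p,2]$, and then the spectral inclusion $\sigma_{\cL^q}(\Delta)\subseteq\sigma_{\cL^p}(\Delta)$ for $1\le p<q\le 2$ (proved by interpolation as in \cite[Proposition 9]{Char}, using uniqueness of the heat kernel) converts this one-parameter family of boundary parabolas into the filled region. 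Without that inclusion, your construction yields only the single curve $\{A\lambda(n-\lambda):\Rea(\lambda)=n/p\}$, not the two-dimensional region in the statement.

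For the second inclusion your weighted-conjugation scheme is, in effect, a sketch of a proof of the resolvent theorem that the paper simply cites: Theorem \ref{th:chlu1} of Charalambous--Lu \cite{ChLu1} (alternatively Taylor's theorem \cite{taylor89}, as discussed in the paper's remark). The paper does not reprove this; it verifies its hypotheses --- Ricci curvature bounded below and exponential volume growth at most $n\alpha_1$, via the Sturm/Bishop comparison argument of Proposition \ref{prop:vol_v2} --- takes $\lambda_1=n^2\alpha_0^2/4$ from Mazzeo \cite{mazzeo_88}, and then does the elementary algebra identifying the complement of $\{n^2\alpha_0^2/4+z^2:|\Ima(z)|>|1/p-1/2|\,n\alpha_1\}$ with the stated parabola. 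The step you yourself flag as ``the technical crux'' (the sharp transfer from weighted $\cL^2$ bounds to $\cL^p$ with weight $a=n\alpha_1|1/p-1/2|$) is precisely the content of that cited theorem, and your proposal leaves it unproven; your final optimization over $b$ (extremum at $b=-y/2a$) does reproduce the correct parabola. One further small point: you invoke Corollary \ref{cor2} for the growth rate $\kappa=n\alpha_1$, but that equality is itself deduced from the $\cL^1$ lower bound of Theorem \ref{th1}; for the containment of the spectrum only the upper bound $\kappa\le n\alpha_1$ of Proposition \ref{prop:vol_v2} is needed, and using it directly avoids any appearance of circularity.
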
 

We note that it is straightforward to show that the regions given in Theorems \ref{th1} and \ref{th2} are invariant under $p \mapsto \frac{p}{p-1}$.  There may exist isolated points in the spectrum in which case  they  must be contained within the larger parabolic regions given in Theorem \ref{th1} and \ref{th2}.  In the following result we further characterize the $\cL^p$ eigenvalues for $1 \leq p < 2$,   showing that any  $\cL^p$ eigenvalue must also be an $\cL^2$ eigenvalue.

\begin{thm} \label{th:discrete}
Let $M$ be a conformally compact manifold. If $\lambda_i$ is an $\cL^2$-eigenvalue for the Laplacian, then it must also be an $\cL^p$-eigenvalue for all $p>2$. Moreover,  if $\lambda_i$ is an $\cL^p$-eigenvalue for the Laplacian for some $1\leq p<2$, then it must also be an $\cL^2$-eigenvalue. In particular, if $\lambda$ is a point in the $\cL^p$ spectrum for $1\leq p <2$ which is not in the $\cL^2$ spectrum, then it cannot be an eigenvalue.
\end{thm}
The above result shows that any point in the $\cL^p$ spectrum for $1 \leq p < 2$ that does not belong  to the $\cL^2$ spectrum cannot be an $\cL^p$ eigenvalue. In particular, any point in the $\cL^p$ outside the real line must belong to the $\cL^p$ essential spectrum for $1 \leq p < 2$.

\subsection{Organization and outline} \label{ss:oo} 
In \S \ref{s:construction} we construct families of approximate eigenfunctions.  These are used to prove that the parabolic region in Theorem \ref{th1} is contained within the $\cL^1$ spectrum, and the one-parameter family of parabolic regions in Theorem \ref{th2} are contained in the $\cL^p$ spectrum for $1\leq p \leq 2$.  These regions are depicted in Figures \ref{fig:L1spec} and \ref{fig:Lpspec}, respectively. We also provide the enveloping curve for the family of parabolas of Theorem \ref{th2} and depict this in Figure \ref{fig:envelope}.In \S \ref{s:resolvent} we demonstrate an upper bound for the exponential rate of volume growth of a conformally compact manifold and use this to show that the exterior of certain parabolic regions of the complex plane must be contained in the resolvent set.  Using the parabolic region contained in the $\cL^1$ spectrum, we obtain a lower bound for the exponential rate of volume growth that is equal to the upper bound, thereby specifying the exponential rate of volume growth precisely.  We use this to 
obtain the parabolic regions in Theorems \ref{th1} and \ref{th2} that contain the $\cL^1$ and $\cL^p$ spectra, respectively.  In Figures \ref{fig:L1spec_res} and \ref{fig:Lpspec_res} we show examples of the parabolic regions containing and contained in the $\cL^1$ and $\cL^p$ spectra, respectively.  We prove Theorem \ref{th:discrete} in \S \ref{s:discrete} and offer concluding remarks in \S \ref{s:conclude}. 

\section{Construction of approximate eigenfunctions} \label{s:construction} In this section we construct approximate eigenfunctions in order to demonstrate that certain regions of the complex plane are contained in the spectrum of the Laplace operator acting on $\cL^p$ for $1 \leq p \leq 2$.  Interestingly, the regions depend on $p$, the dimension $n+1$, and the geometry at infinity (at the boundary).  We obtain that the $\cL^1$ spectrum contains a parabolic region as depicted in Figure \ref{fig:L1spec}.  For $p \in (1,2)$ we obtain that the $\cL^p$ spectrum contains the closures of the interiors of a one-parameter family of parabolas as depicted in Figure \ref{fig:Lpspec}.  For $p=2$, our results show that the ray $[\alpha_0^2n^2/4, \infty)$ is contained in the $\cL^2$ spectrum as previously shown by Borthwick \cite{borthwick}.

By \cite[Proposition 3.1]{borthwick}, there exists a boundary defining function $x$ and a compact set $K$ such that the metric 
\begin{equation} \label{eq:special_g}  g = \frac{dx^2}{\alpha(y)^2 x^2} + \frac{h(x,y,dy)}{x^2} + O(x^\infty), \quad \textrm{ on } M \setminus K \end{equation}
with 
\begin{equation} \label{eq:product_x1} M \setminus K \cong (0, x_1) \times Y, \textrm{ with local coordinates } (x, y) = (x, y_1, \ldots, y_n), \end{equation} 
for some fixed $x_1 > 0$.  Above, the coordinate $x$ is a boundary defining function, and $y=(y_1, \ldots, y_n)$ are local coordinates on the boundary $Y$.  In \eqref{eq:special_g} the function $\alpha(y)$ is smooth and without loss of generality may be chosen to be strictly positive.  In \eqref{eq:special_g}, $h(x,y,dy)$ is a smooth family of metrics on $Y$ that depends smoothly on $x$ as a parameter and converges uniformly to a fixed smooth metric $h(0,y,dy)$ on $Y$ as $x \to 0$.  We use the following notations:  
\begin{enumerate} 
\item A function $f$ defined on $M \setminus K$ is $O(x^\infty)$ if for any $n \in \N$ there exists a constant $C_n >0$ such that 
\[ |f(x,y)| \leq C_n x^n \textrm{ on } M \setminus K.\]  
\item Two functions $\varphi$ and $\psi$ defined on $M \setminus K$ satisfy $|\varphi(x,y)| \lesssim |\psi(x,y)|$ if there is a constant $c>0$ such that 
\[ |\varphi(x,y)| \leq c |\psi(x,y)| \textrm{ on } M \setminus K. \]  
\item Two functions $\varphi$ and $\psi$ defined on $M \setminus K$ satisfy $|\varphi(x,y)| \approx |\psi(x,y)|$ if 
\[|\varphi(x,y)| \lesssim |\psi(x,y)| \lesssim |\varphi(x,y)|. \] 
\end{enumerate} 

Expressed as in \eqref{eq:special_g}, for a point $y$ in the boundary, the sectional curvature tends to $-\alpha(y)^2$ as $x \to 0$ with respect to the local coordinates $(x,y)$ that are valid in a neighborhood of the boundary point $y$.  For such a metric \eqref{eq:special_g}, the Laplace operator takes the form 
\begin{multline} \label{eq:Delta} \Delta = \alpha^2 \left[ - (x \pa_x)^2 + n x \pa_x - x^2 (\pa_x \log \sqrt h) \pa_x \right] \\ + x^2 \Delta_h - x^2 (\pa_i \log \alpha) h^{ij} \pa_j + O(x^\infty). \end{multline}  
Above, $\Delta_h$ is the Laplace operator on $Y$ with respect to the metric $h(x,y, dy)$, and $\sqrt h$ is abbreviated notation for $\sqrt {\det h}$.  We note that 
\[ \Delta_h = -\frac{1}{\sqrt{h}} \sum_{i,j=1} ^n \pa_i h^{ij} \sqrt{h} \pa_j.\] 

We will build a family of approximate eigenfunctions that are compactly supported in $M\setminus K \cong (0, x_1) \times Y$.  To motivate our construction, we consider a function of the form $x^\lambda$ for $\lambda \in \C$.  We apply the Laplace operator \eqref{eq:Delta} to such a function and obtain 
\begin{multline} \label{eq:approx_ef0} \Delta x^\lambda = \alpha^2 \left[ - \lambda^2 x^\lambda + n \lambda x^\lambda - \lambda x^{\lambda+1} \pa_x (\log \sqrt h) \right] + x^2 \Delta_h (x^\lambda) + O(x^\infty)\\ 
 = \alpha^2 \lambda(n-\lambda) x^\lambda - \alpha^2 \lambda x^{\lambda+1} \pa_x (\log \sqrt h) + O(x^\infty). \end{multline}
If $\alpha$ were to be a constant, and if $x^\lambda$ were to be in $\cL^p$, then as $x \to 0$, the function $x^\lambda$ would be, up to the error term $O(x^{\lambda+1})$, an eigenfunction with eigenvalue $\alpha^2 \lambda (n-\lambda)$.  This consideration inspires our construction of approximate eigenfunctions.

The approximate eigenfunctions will be of the form 
\beq F(x,y) = \phi(x) b(y) x^\lambda. \label{eq:F(x,y)} \eeq 
Above, $\phi$ and $b$ are smooth cut-off functions such that $F$ is supported in $M \setminus K$.  We calculate for such a function $F(x,y)$
\beq  \Delta F(x,y) &=& \alpha^2(y) \lambda (n-\lambda) F(x,y) \nn \\ 
 &+& x^{\lambda + 1} \alpha^2(y) \left( \phi'(x) b(y) (n-2\lambda - 1) - \lambda \phi(x) b(y) \partial_x \log \sqrt {h(x,y)} \right) \nn \\  &+& x^{\lambda+2} \left( - \alpha^2(y) \phi''(x) b(y) - \alpha^2(y) \phi'(x) b(y) \partial_x \log \sqrt{h(x,y)} \right) \nn \\ &+& x^{\lambda+2} \left( \phi(x) \Delta_h b(y) - \phi(x) \sum_{i,j=1} ^n h^{ij} \partial_i (\log \alpha(y)) \partial_j (b(y)) \right) \nn \\ &+& O(x^\infty). \label{eq:laplace3} \eeq 
We will use the first term on the right side of the first line in \eqref{eq:laplace3} to approximate an eigenfunction.  The function 

\[ \alpha^2:Y \to |d\rho|^2(Y)  \subset (0, \infty), \]  
where $|d\rho|^2(Y)$ is the image of $|d\rho|^2$ over the boundary of $M$.   Since $\alpha^2$ is smooth, and $Y$ is a smooth $n$-dimensional manifold, for any value $A \in |d\rho|^2(Y)$, and for any $\epsilon > 0$, the inverse image $(\alpha^2)^{-1}(A-\epsilon, A+\epsilon)$ is an open set in $Y$.  It therefore contains a ball of radius $r=r(\epsilon, A)>0$.  We denote this ball by $B_\epsilon$.  On $B_\epsilon$ we have the estimate 
\[ |\alpha^2(y) - A| < \epsilon.\]
We will use this to construct a linearly independent family $\{ F_\epsilon \}_{\epsilon > 0}$ of approximate eigenfunctions that satisfy 
\begin{equation} \label{eq:lp_eval_est} ||\Delta F_\epsilon - \Lambda F_\epsilon ||_p \leq \epsilon ||F_\epsilon||_p + O(x^\infty), \end{equation} 
with eigenvalue parameter 
\[ \Lambda := A \lambda(n-\lambda) = \Lambda(n, \lambda, A).\]

We choose the function $b(y)$ to be smooth, $b:Y \to [0, 1]$, the support of $b$ is contained in $B_\epsilon$, and 
\begin{equation} \label{eq:bpp} ||b||_p ^p := \int_Y |b(y)|^p d\Vol_h(y) > 0 \quad \forall 1 \leq p \leq 2. \end{equation}
Above, the measure of integration $d\Vol_h(y)$ is induced by the Riemannian metric, $h(x=0, y, dy)$ on $Y$.  We may use the slight abuse of notation for simplicity and write $dy$ rather than the more cumbersome $d\Vol_h (y)$.

As we have defined it, the function $b$ implicitly depends on $\epsilon$.  Next, we consider the cutoff function $\phi(x)$.  As indicated by the notation, this function depends only on $x$ and is independent of $y$.  We choose this function to have compact support contained in $(\delta, x_1)$, for $\delta>0$ that shall tend to zero.  We therefore assume without loss of generality that 
\[ 2 \delta < \min \left\{ \frac{x_1}{2}, 1\right\}, \textrm{ and } \phi(x) = 1 \textrm{ for } x \in \left[ 2 \delta, \frac{x_1}{2} \right].\]
The cutoff function $\phi$ therefore depends on the parameter $\delta > 0$.
For the sake of readability, we simply write $F$ rather than $F_\epsilon$ and estimate the $\cL^p$ norm of such an $F(x,y)$ from below,  
\beq ||F||_p ^p &=& \int_{\supp(\phi)} \int_{\supp(b)} |x^{\lambda p}| \, |\phi(x)|^p |b(y)|^p x^{-n-1} \sqrt{h} \alpha^{-1} dy dx +O(x^\infty) \nn \\
&\geq & \int_{2 \delta} ^{x_1/2} \int_{\supp(b)} |x^{\lambda p - n - 1}| \, |b(y)|^p \sqrt{h(x,y)} \alpha(y)^{-1} dy dx + O(x^\infty) \nn \\ 
&\gtrsim&   \left( \int_{2 \delta} ^{x_1/2} |x^{\lambda p -n-1}| dx \right) ||b||_p ^p. \label{eq:Fpp_below} 
\eeq
This follows because $\sqrt{h(x,y)}$ and $\alpha^{-1} (y)$ are uniformly bounded from below (and above) on $[2\delta, x_1/2] \times Y$.  We similarly estimate from above 
\beq ||F||_p ^p &=& \int_{\supp(\phi)} \int_{\supp(b)} |x^{\lambda p}| \, |\phi(x)|^p |b(y)|^p x^{-n-1} \sqrt{h} \alpha^{-1} dy dx +O(x^\infty) \nn \\
&\leq & \int_{\delta} ^{x_1} \int_{\supp(b)} |x^{\lambda p - n - 1}| \, |b(y)|^p \sqrt{h(x,y)} \alpha(y)^{-1} dy dx + O(x^\infty) \nn \\ 
&\lesssim&   \left( \int_{\delta} ^{x_1} |x^{\lambda p -n-1}| dx \right) ||b||_p ^p. \label{eq:Fpp_above} 
\eeq
Next we calculate 
\beq \int_{2 \delta} ^{x_1/2} |x^{\lambda p - n - 1}| dx &=& \begin{cases} \displaystyle{\frac{ (x_1/2)^{\Rea(\lambda)p - n} - (2\delta)^{\Rea(\lambda)p-n}}{\Rea(\lambda)p - n},} & \Rea(\lambda) \neq \frac n p, \\ 
& \\
\displaystyle{\log \left(  x_1/(2 \delta) \right),} & \Rea(\lambda) = \frac n p,  \end{cases}  \nn \\  \\
 \int_{\delta} ^{x_1} |x^{\lambda p - n - 1}| dx &=& \begin{cases} \displaystyle{ \frac{ x_1 ^{\Rea(\lambda)p - n} - \delta^{\Rea(\lambda)p-n}}{\Rea(\lambda)p - n},} & \Rea(\lambda) \neq \frac n p, \\ 
 &\\
 \displaystyle{\log \left( x_1 / \delta \right),} & \Rea(\lambda) = \frac n p, \end{cases}. \nn \eeq
 To control the terms in \eqref{eq:laplace3}, we will need $||F||_p$ to be large.  We will therefore henceforth assume that $\lambda \in \C$ satisfies 
 \beq \Rea(\lambda) \leq \frac n p. \nn \eeq 
We then obtain that 
\[\int_{2 \delta} ^{x_1/2} |x^{\lambda p - n - 1}| dx \approx \int_{\delta} ^{x_1} |x^{\lambda p - n - 1}| dx \approx \begin{cases} \delta^{\Rea(\lambda)p -n}, & \Rea(\lambda) < \frac n p, \\ \log(1/\delta), & \Rea(\lambda) = \frac n p. \end{cases} \]
Consequently, we estimate 
\beq  ||F||_p ^p \approx  ||b||_p ^p \times \begin{cases} \delta^{\Rea(\lambda)p -n}, & \Rea(\lambda) < \frac n p, \\ \log(1/\delta), & \Rea(\lambda) = \frac n p. \end{cases} \label{eq:Fpp} \eeq

We calculate using \eqref{eq:laplace3} and the triangle inequality 
\beq \left \Vert \Delta F(x,y) - \Lambda F(x,y)\right \Vert_p & \leq &  \left \Vert \left( \alpha^2(y) \lambda (n-\lambda) - \Lambda\right) F(x,y) \right \Vert_p \nn \\ 
&+& \left \Vert x^{\lambda + 1} \alpha^2(y) \phi'(x) b(y) (n-2\lambda - 1) \right \Vert_p \nn \\
&+& \left \Vert x^{\lambda+1}  \alpha^2(y)  \lambda \phi(x) b(y) \partial_x \log \sqrt {h(x,y)}  \right \Vert_p \nn \\
&+& \left \Vert x^{\lambda+2} \alpha^2(y) \phi''(x) b(y) \right \Vert_p \nn \\ 
&+& \left \Vert x^{\lambda+2} \alpha^2(y) \phi'(x) b(y) \partial_x \log \sqrt{h(x,y)} \right \Vert_p \nn \\  
&+& \left \Vert x^{\lambda+2} \phi(x) \Delta_h b(y) \right \Vert_p \nn \\ 
&+& \left \Vert x^{\lambda+2} \phi(x) \sum_{i,j=1} ^n h^{ij} \partial_i (\log \alpha(y)) \partial_j (b(y)) \right \Vert _p \nn \\ 
&+& O(x^\infty) \nn \\ 
&=& I+II+III+IV+V+VI+VII+O(x^\infty). \label{eq:Delta_Fp_est1} 
\eeq 

On the support of $b$, we have the estimate 
\[ |\alpha^2(y) \lambda (n-\lambda) - \Lambda| < \epsilon \implies I^p \leq \epsilon^p ||F(x,y)||^p _p. \]
We estimate next 
\[ |II|^p \leq |n-2\lambda -1|^p \int |x^{\lambda p + p-n-1}| \, |\phi'(x)|^p |\alpha(y)^{2p-1}| |b(y)|^p \sqrt h dx dy \]
\[ \lesssim ||b||_p ^p \int_{\delta} ^{x_1}|x^{\lambda p + p-n-1}| \, |\phi'(x)|^p dx. \]
The support of $\phi'$ is contained in $[\delta, 2 \delta] \cup [x_1/2, x_1]$.  The parameter $\delta$ will tend to zero, while $x_1>0$ is fixed.  Consequently, we may estimate $|\phi'|^p$ to be on the order of $\delta^{-p}$ on $x \in [\delta, 2\delta]$, and $|\phi'|^p$ to be uniformly bounded on $x \in [x_1/2, x_1]$.  We then estimate 
\beq |II|^p &\lesssim& ||b||_p ^p \int_{\delta} ^{2\delta} \delta^{-p} |x^{\lambda p +p-n -1}| dx \nn \\ 
&\approx& ||b||_p ^p \times \begin{cases} \delta^{\Rea(\lambda)p -n}, & \Rea(\lambda) < \frac n p, \\ \log(2), & \Rea(\lambda) = \frac n p. \end{cases}  \label{eq:n_on_p} \eeq
We need this to be small in comparison to $||F||_p ^p$.  For this reason we choose 
\begin{equation} \label{eq:relambda2} \Rea(\lambda) = \frac n p \end{equation} 
and provide the remaining estimates with this choice. Then we obtain that 
\[ |II|^p \lesssim ||b||_p ^p.\]
By the assumptions on $b$, $|II|$ is uniformly bounded from above as both $\epsilon, \delta \to 0$. 

Next we consider 
\beq |III|^p &\leq& |\lambda|^p \int |x^{\lambda p +p-n-1}| |\alpha(y)|^{ 2 p -1} \,|\phi(x)|^p |b(y)|^p |\partial_x \log \sqrt h|^p \sqrt h dx dy \nn \\
& \lesssim & ||b||^p _p \int_{\delta} ^{x_1} |x^{\lambda p + p - n - 1} | dx \nn \\ 
&\approx & ||b||_p ^p. \nn \eeq
since $p \geq 1$.  Next we estimate 
\beq |IV|^p &=& \int |x^{(\lambda+2)p}| \alpha^{2p -1} |\phi''|^p |b|^p x^{-n-1} \sqrt h dx dy \nn \\ 
&\lesssim& ||b||_p ^p \int_\delta ^{x_1} x^{2p-1} |\phi''|^p dx \nn  \; \lesssim  \; ||b||_p ^p \int_{\delta} ^{2\delta} x^{2p-1} |\phi''|^p dx \nn \\ 
& \approx & ||b||_p ^p.  \nn 
\eeq 
Above, we used the estimate of $|\phi''| \lesssim \delta^{-2}$ for $x \in [\delta, 2 \delta]$. Next, since  $p \geq 1$ we also obtain 
\beq |V|^p &=& \int |x^{p(\lambda+2)}| \alpha^{2p -1} \,|\phi'|^p \,|b|^p \, |\partial_x \log \sqrt h|^p x^{-n-1} \sqrt h dx dy \nn \\ 
&\lesssim & ||b||_p ^p \int_{\delta} ^{x_1} x^{2p-1} |\phi'|^p dx \; \lesssim \; ||b||_p ^p \int_\delta ^{2 \delta} x^{p-1} dx \nn \\ 
& \approx & ||b||_p ^p. \nn 
\eeq
Next we estimate 
\[ |VI|^p \lesssim \int_\delta ^{x_1} x^{2p-1} |\Delta_h b(y)|^p \sqrt h dx dy \; \lesssim \; \left \Vert \Delta_h b \right \Vert_p ^p, \]
with the norm on the right defined analogously to \eqref{eq:bpp}.  A completely analogous estimate shows that 
\[ |VII| \lesssim \left \Vert \sum_{i,j=1} ^n h^{ij} \partial_i (\log \alpha) \partial_j (b) \right \Vert_p.\]

Since $b$ is supported in a ball of radius $r = r_\epsilon$ in $Y$, we may choose $b$ to satisfy for a sufficiently large $N \in \N$, 
\[ || \Delta_h b||_p + \left \Vert \sum_{i,j=1} ^n h^{ij} \partial_i (\log \alpha) \partial_j (b) \right \Vert_p \lesssim r_\epsilon ^{-N} ||b||_p.\]
We therefore obtain that 
\[ II+III+IV+V+VI+VII \lesssim (1+r_\epsilon ^{-N}) ||b||_p.\]
Since it is quite likely, the way we have chosen the support of $b$, that $r_\epsilon \to 0$ as $\epsilon \to 0$, we may assume that $r_\epsilon < 1$ and therefore obtain 
\[ II + III+IV+V+VI+VII \lesssim r_\epsilon ^{-N} ||b||_p.\]
On the other hand, by \eqref{eq:Fpp} and \eqref{eq:relambda2}, we have the estimate 
\[ ||F||_p  \approx ||b||_p \log(1/\delta)^{1/p}. \]
To obtain the estimate 
\[ II+III+IV+V+VI+VII \lesssim \epsilon ||F||_p,\] 
it therefore is sufficient to require that 
\[ r_\epsilon^{-N} < \epsilon |\log(\delta)|^{1/p} \iff \delta < \exp \left( - r_\epsilon ^{-Np} \epsilon^{-p} \right). \] 

\begin{figure}[h!] 
\begin{center} 
\includegraphics[width=0.8\textwidth]{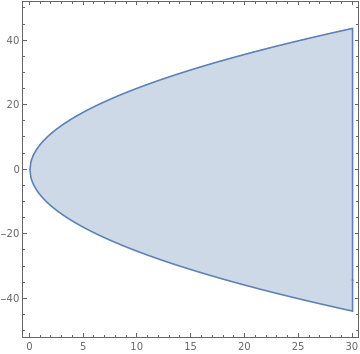}
\caption{The shaded region is contained in the $\cL^1$ spectrum.  In this example, we have taken $n=4$ (so the dimension of the manifold is $5$), and $\alpha_1=2$.} 
\label{fig:L1spec} 
\end{center} 
\end{figure} 

In this way we obtain that 
\[ || \Delta F(x,y) - \Lambda F(x,y)||_p \lesssim \epsilon ||F||_p.\]
The functions $F(x,y)$ are linearly independent because their supports are strictly increasing towards $\pa M$ as $\delta \to 0$, since $\delta \to 0$ as $\epsilon \to 0$.  We therefore obtain a family of approximate eigenfunctions for the spectral parameter  
\[ \Lambda = A \lambda (n-\lambda), \textrm{ for any } A \in \alpha^2 (Y), \] 
\[ \textrm{ for any $\lambda \in \C$ with } \Rea(\lambda) = \frac n p. \]
Consequently, all of these values of $\Lambda$ are contained in the spectrum of the Laplace operator acting on $\cL^p$ functions.  
Using interpolation as in the proof of \cite[Proposition 9]{Char}, we can show that whenever $1\leq p < q\leq 2$ the $\cL^q$ spectrum is contained in the $\cL^p$ spectrum. The only requirement for this containment is that the heat kernel of the Laplacian is unique, which is certainly true on conformally compact manifolds since their Ricci curvature is bounded below by a constant. 

So, writing $\lambda = \frac n q + is$, for $1 \leq q  \leq 2$, 
\[ A\left(\frac n q + is \right) \left( n- \frac n q - i s \right) = A\left( \frac n q \left(n-\frac n q \right) + s^2 + is\left( n - \frac{2n}{q} \right) \right).\]
The $\cL^p$ spectrum therefore contains the set 
\begin{equation} \label{eq:Lpspec_v1} \bigcup_{p \leq q \leq 2} \left\{ A \left( s^2 + \frac{n^2}{q} \left(1-1/q \right) \right) + i A s n \left( 1-\frac 2 q \right): s \in \R \right\} \subset \C \end{equation} 
for all $A \in \alpha^2(Y)$.  For $p=2$, we therefore obtain that the $\cL^2$ spectrum contains the ray 
\[ \left[ \frac{\alpha_0^2 n^2}{4}, \infty \right),\]
which was previously shown by Mazzeo \cite{mazzeo_88}.  
For $1 \leq p < 2$, this ray is also contained in the $\cL^p$ spectrum.  Next we aim to show that the set \eqref{eq:Lpspec_v1} is equal to 
\begin{equation} \left\{ x+iy: x \geq \frac{y^2}{An^2 (1-2/p)^2} + A \frac{n^2}{p} (1-1/p), \, y \in \R \right\} \subset \C. \label{eq:Aparabolas} \end{equation} 
If $s=0$ in \eqref{eq:Lpresv1}, then this set is 
\[ \left\{ A \frac{n^2}{q} \left( 1 - \frac 1 q \right) : p \leq q \leq 2 \right\}.\]
We observe that the function 
\[ \frac{y^2}{An^2(1-2/q)^2} + A \frac{n^2}{q} (1-1/q) \]
is a continuous and increasing function of $q \in [p, 2)$, tending to infinity as $q \to 2$.  So for a given 
\[ x \geq \frac{y^2}{An^2 (1-2/ p)^2} + A \frac{n^2}{p} (1-1/p)\]
there exists a unique $q \in [p,2)$ such that 
\[x= \frac{y^2}{An^2(1-2/q)^2} + A \frac{n^2}{q} (1-1/q) . \] 
Then, since $An(1-2/q) \neq 0$, we may set 
\[ s = \frac{y}{An(1-2/q)}  \implies As^2 = \frac{y^2}{A n^2 (1-2/q)^2}.\]
Writing both $x$ and $y$ in terms of $s$ we  therefore obtain that 
\[ x+iy = A s^2 + A \frac{n^2}{q} (1-1/q) + i A s n(1-2/q) \] 
is an element of the set \eqref{eq:Lpspec_v1}.  This shows a bijection between the sets \eqref{eq:Lpspec_v1} and \eqref{eq:Aparabolas}.

\begin{figure}[h!] 
\begin{center} 
\includegraphics[width=0.8\textwidth]{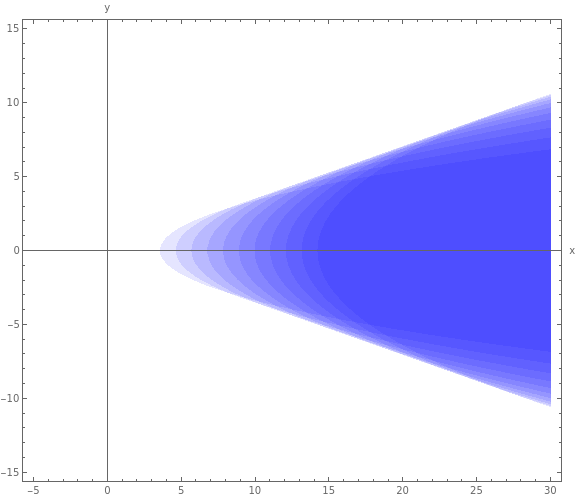}
\caption{The blue shaded region is the ensemble of parabolic regions that are contained in the $\cL^p$ spectrum.  In this example, we have taken $n=4$ (the dimension of the manifold is $5$), $p=\frac 3 2$, and $A \in [1,4]$ ($\alpha_0=1$ and $\alpha_1=2$).} 
\label{fig:Lpspec} 
\end{center} 
\end{figure}

For $p=1$, since $A \in \alpha(Y)$, and $\alpha_1^2 \in \alpha^2(Y)$ we obtain that the $\cL^1$ spectrum contains the parabolic region 
\beq \left\{ z = x+iy: x \geq \frac{y^2}{\alpha_1^2 n^2}, \quad y \in \R \right\}. \label{eq:Aparabola1} \eeq
The above parabola \eqref{eq:Aparabola1} gives the largest region we can obtain for the $\cL^1$ spectrum based on the values of $\alpha(Y)$. An example of this region is shown in Figure \ref{fig:L1spec}. Note that the vertex of the parabola is at $(0,0)$. This gives the first part of Theorem \ref{th1}.

For $p \in (1, 2)$, the $\cL^p$ spectrum contains the ensemble of parabolic regions 
\[ \left\{ z = x+iy: x \geq \frac{y^2}{A n^2 (1-2/p)^2} + A \frac{n^2}{p} \left( 1 - \frac 1 p \right), \, y \in \R, A \in \alpha^2(Y) \right\} \label{eq:Aparabolas2} \] 
which gives us the first part of Theorem \ref{th2}. 
As the parameter $A$ increases, the parabolas become wider and further shifted to the right.  Decreasing $A$ shifts the parabola to the left while making the parabola narrower.  An example of this ensemble of parabolc regions is shown in Figure \ref{fig:Lpspec}.  We note that $\alpha^2(Y)$ could be any of the following types of sets: 
\begin{enumerate} 
\item a single value if $|d\rho|$ is constant on $\pa M$; 
\item a finite discrete set of values if $\pa M$ consists of finitely many connected components on which $|d\rho|$ is constant  but not necessarily having the same value; 
\item the interval $[\alpha_0^2, \alpha_1^2]$ if $\pa M$ has one connected component on which $|d\rho|$ is non-constant and varies between $\alpha_0 < \alpha_1$; 
\item the union of finitely many closed intervals contained in $[\alpha_0^2, \alpha_1^2]$ if $\pa M$ has several connected components on which $|d\rho|$ is non-constant. 
\end{enumerate} 

It is interesting to consider what the enveloping curve to this family of parabolas is in case (3) above, when the range of the function $\alpha^2$ is exactly the interval  $[\alpha_0^2,\alpha_1^2]$. To find the enveloping curve we first differentiate the 1-parameter family of curves 
\[
F(A,y)=\frac{y^2}{A n^2 (1-2/p)^2} + A \frac{n^2}{p} \left( 1 - \frac 1 p \right)
\]
with respect to $A$.  We find the critical point $A=A(y)$. 
\begin{equation*}
    \begin{split}
        &\frac{\partial F(A,y)}{\partial A} = - \frac{1}{A^2} \, \frac{y^2 p^2}{n^2 (p-2)^2} + \frac{n^2 (p-1)}{p^2} =0\\
        \Longleftrightarrow & A= \pm \frac{y \, p^2}{n^2 |p-2| \, \sqrt{p-1}}.
    \end{split}
\end{equation*}
Substituting this critical value back into $x=F(A,y)$, yields the enveloping curve
\[
x= \pm \, y \; \frac{ 2 \,\sqrt{p-1} }{  |p-2|}  ,
\]
which are two lines passing through the origin. 

\begin{figure}[h!] 
\begin{center} 
\includegraphics[width=0.8\textwidth]{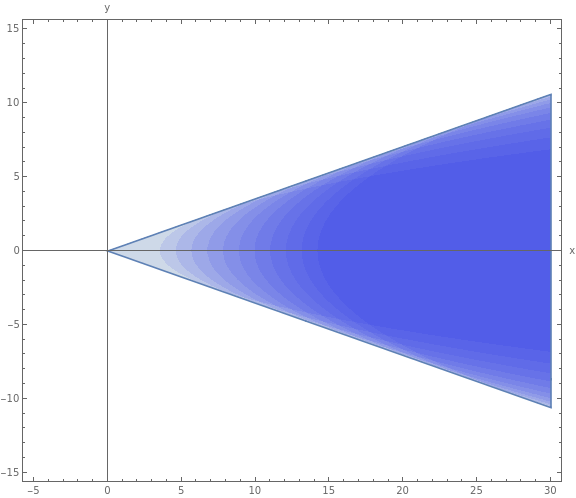}
\caption{This shows the region contained in the spectrum from Figure \ref{fig:Lpspec}, together with the enveloping curve.} 
\label{fig:envelope} 
\end{center} 
\end{figure} 

Each curve in the 1-parameter family $x=F(A,y)$ is tangent to the enveloping curve.  All of these curves are contained in the cone 
\[ \left\{ x+iy: x \geq  \, |y| \; \frac{ 2 \,\sqrt{p-1} }{  |p-2|} \right\}. \]   
This is depicted in Figure \ref{fig:envelope}.

\section{The resolvent set} \label{s:resolvent}
Here we show that the resolvent set contains the exterior of a certain parabolic region.  Consequently,  the spectrum of the Laplace operator is contained within a certain parabolic region that depends on $\alpha_1$, $n$, $p$, and the bottom of the $\cL^2(M)$ spectrum $\lambda_1$. To prove this, we will use a recent result due to the first author and Z. Lu which holds on manifolds with an exponential rate of volume growth defined as follows. 
\begin{Def} \label{def:volgrow} 
The exponential rate of volume growth of $M$, denoted by $\kappa$, is the infimum of all real numbers satisfying the property: for any $\eps>0$, there is a constant $C(\eps)$, depending only on $\eps$ and the dimension of the manifold, such that for any $p\in M$ and any $R\geq 1$, we have
\begin{equation}\label{3-1a}
{\rm Vol}(B_p(R))\leq C(\eps) {\rm Vol}(B_p(1))e^{(\kappa+\eps)R}.
\end{equation}
Above, $\Vol(B_p (R)$ is the volume of the ball of radius $R$ centered at $p$. 
We define $\kappa$ to be $\infty$ if for any $\kappa>0$ and  any $C>0$, we can find a pair $(p,R)$ such that 
\[ {\rm Vol}(B_p(R)) > C {\rm Vol}(B_p(1))e^{\kappa R}.\]
\end{Def}

With this notion of exponential rate of volume growth the first author and Z. Lu demonstrated the following result in  \cite{ChLu1}. 
\begin{thm}[NC \& Zhiqin Lu \cite{ChLu1}] \label{th:chlu1} 
Let $M$ be a complete manifold with Ricci curvature bounded below.  Denote by $\kappa$  the exponential rate of volume growth of $M$ as in Definition \ref{def:volgrow} and $\lambda_1$   the infimum  of the spectrum of the Laplacian on $\cL^2$. Let $z$ be a complex number such that ${|\rm Im}(z)|>\kappa/2$. Then
\[
(H-z^2)^{-1}
\]
is a bounded operator on $\cL^1(M)$, where
\[
H=\Delta-\lambda_1.
\]
Moreover, for $p \geq 1$, 
\[
(H-z^2)^{-1}
\]
is a bounded operator on $\cL^p(M)$, whenever  ${|\rm Im}(z)|>\left|\tfrac 1p - \tfrac 12\right|\kappa$.
\end{thm}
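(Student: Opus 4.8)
The plan is to reduce the whole statement to a single estimate on the wave propagator of $H$ and then interpolate. Since $\lambda_1$ is the bottom of the $\cL^2$-spectrum, the operator $H = \Delta - \lambda_1$ is non-negative and self-adjoint on $\cL^2(M)$, so $\sqrt H$ and the wave group $\cos(t\sqrt H)$ are defined by the spectral theorem, with $\|\cos(t\sqrt H)\|_{\cL^2 \to \cL^2} \leq 1$. Assuming first that $\Im(z) > 0$, a one-variable spectral-calculus computation gives the representation
\[
(H - z^2)^{-1} = \frac{i}{z}\int_0^\infty e^{izt}\cos(t\sqrt H)\,dt,
\]
the case $\Im(z) < 0$ following by complex conjugation, as $H$ is a real operator. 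The advantage of this representation over the Laplace-transform formula $\int_0^\infty e^{z^2 t}e^{-tH}\,dt$ is that the convergence factor is $e^{-\Im(z)t}$, depending only on $\Im(z)$ and not on $\Re(z^2)$; this is precisely what yields a region of the form $\{|\Im(z)| > c\,\kappa\}$ rather than one governed by $\Re(z^2)$.

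The heart of the argument is the bound that, for every $\eps>0$ and a suitable $N$ depending only on the dimension,
\[
\|\cos(t\sqrt H)\|_{\cL^1 \to \cL^1} \lesssim (1+t)^N e^{(\kappa+\eps)t/2},
\]
with implied constant depending on $\eps$. Two geometric inputs enter here. First, finite propagation speed for the wave equation associated to $H$ (whose principal part coincides with that of $\Delta$) shows that the Schwartz kernel of $\cos(t\sqrt H)$ is supported in $\{(x,y): d(x,y)\leq t\}$. Second, the Ricci lower bound yields Li--Yau--Gaussian heat-kernel estimates, which together with the $\cL^2$-bound above control the $\cL^2$-mass of this kernel on each distance annulus $A_j=\{j\leq d(x,y)<j+1\}$. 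Converting this $\cL^2$-information into the $\cL^1$ operator norm by Cauchy--Schwarz costs a factor $\Vol(B_y(t))^{1/2}\approx e^{(\kappa+\eps)t/2}$; it is exactly this square root, forced by Cauchy--Schwarz, that produces the constant $\kappa/2$ instead of $\kappa$. Inserting the bound into the wave representation, $\int_0^\infty e^{-\Im(z)t}(1+t)^N e^{(\kappa+\eps)t/2}\,dt$ converges whenever $\Im(z)>(\kappa+\eps)/2$; since the target set $\{\Im(z)>\kappa/2\}$ is open, letting $\eps\to 0$ gives the $\cL^1$ statement.

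For general $p$ I would interpolate the \emph{propagator}, not the resolvent. At $p=2$ the resolvent $(H-z^2)^{-1}$ is bounded precisely when $z^2\notin \sigma(H)=[0,\infty)$, i.e. when $\Im(z)\neq 0$, which is the asserted condition $|\Im(z)|>|\tfrac1p-\tfrac12|\kappa$ at $p=2$. For $1<p<2$, Riesz--Thorin applied to the fixed operator $\cos(t\sqrt H)$, between the $\cL^1$ bound $e^{(\kappa+\eps)t/2}$ and the $\cL^2$ bound $1$, gives $\|\cos(t\sqrt H)\|_{\cL^p\to\cL^p}\lesssim (1+t)^N e^{(\tfrac1p-\tfrac12)(\kappa+\eps)t}$; feeding this into the wave representation shows the resolvent is bounded on $\cL^p$ for $\Im(z)>(\tfrac1p-\tfrac12)\kappa$. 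The range $p>2$ then follows by duality, since $H$ is self-adjoint and $|\tfrac1p-\tfrac12|=|\tfrac1{p'}-\tfrac12|$. This interpolation involves only a single operator at each fixed $t$, so it avoids the difficulty of interpolating a $z$-dependent family.

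The main obstacle is the $\cL^1$ wave-propagator bound, and specifically obtaining the sharp threshold $\kappa/2$. The subtlety is twofold: first, $\cos(t\sqrt H)$ is an oscillatory, non-positivity-preserving operator that is not a priori bounded on $\cL^1$, so its kernel must be controlled through the $\cL^2$ theory and finite propagation rather than by a maximum principle; and second, one must organise the annular sum so that the $e^{(\kappa/2)j}$ growth coming from $\Vol(A_j)^{1/2}$ is beaten by the $e^{-\Im(z)j}$ decay, which is exactly why the threshold is $\kappa/2$ and not $\kappa$. A secondary technical point is the rigorous justification of the wave representation itself, which requires a mild regularisation (for instance inserting a factor $e^{-sH}$ and letting $s\to 0$) so that the oscillatory integral converges as an identity of bounded operators before the kernel estimates are applied.
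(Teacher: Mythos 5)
The paper contains no proof of this theorem to compare against: it is quoted verbatim from \cite{ChLu1} (listed as ``in preparation''), and the Remark at the end of \S\ref{s:resolvent} notes that Taylor's theorem in \cite{taylor89} could be used in its place. Your architecture --- wave representation of the resolvent, finite propagation speed, volume growth via Cauchy--Schwarz, then interpolation and duality --- is precisely the circle of ideas behind those results, and your representation formula $(H-z^2)^{-1}=\frac{i}{z}\int_0^\infty e^{izt}\cos(t\sqrt H)\,dt$ and the $p=2$ and duality endpoints are correct. Nevertheless, as written the proposal has a genuine gap at the step you yourself identify as the heart of the argument.

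The claimed fixed-time bound $\|\cos(t\sqrt H)\|_{\cL^1\to\cL^1}\lesssim (1+t)^N e^{(\kappa+\varepsilon)t/2}$ is false. Already on $\R^n$ with $n\geq 2$ (Ricci $\geq 0$, $\kappa=0$), the operator $\cos(t\sqrt{\Delta})$ is unbounded on $\cL^1$ for every fixed $t>0$: its kernel is a distribution carried by the sphere $\{d(x,y)=t\}$ involving transverse derivatives of surface measure, and by the sharp fixed-time estimates of Miyachi and Peral one needs $(n-1)\left|\tfrac1p-\tfrac12\right|$ derivatives of smoothing for $\cL^p$-boundedness of the wave propagator. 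Since the failure occurs at each fixed $t$, no polynomial factor $(1+t)^N$ can repair it, and the subsequent Riesz--Thorin step then has nothing to interpolate. Your closing remark treats the insertion of a regularising factor such as $e^{-sH}$ as a ``secondary technical point'' needed only so the $t$-integral converges; in fact this is where all the content lies, because the smoothing must remain coupled to the $t$-integration. The correct implementation (Taylor's, following Cheeger--Gromov--Taylor) never estimates the bare propagator on $\cL^1$: one splits the integral into unit pieces $T_j=\frac{i}{z}\int\chi_j(t)e^{izt}\cos(t\sqrt H)\,dt$, uses finite propagation to localise the kernel of $T_j$ in $\{d(x,y)\leq j+1\}$, inserts spectral smoothing $(I+H)^{-k}$ (whose kernel lies in $\cL^2$ uniformly in the base point, by the Ricci lower bound, once $k$ is large relative to the dimension) together with integration by parts in $t$ to bound $T_j(I+H)^{k}$ on $\cL^2$ by $C\,e^{-\Ima(z)\,j}$ up to polynomial losses, and only then applies Cauchy--Schwarz with $\Vol(B_y(j+1))^{1/2}\lesssim e^{(\kappa+\varepsilon)j/2}$, summing the geometric series under $\Ima(z)>(\kappa+\varepsilon)/2$; the $\cL^p$ case follows by interpolating each fixed piece $T_j$ between its $\cL^1$ and $\cL^2$ bounds (the $z$-dependence of $T_j$ is harmless, since Riesz--Thorin is applied to one operator at a time), which reproduces your numerology $\Ima(z)>\left(\tfrac1p-\tfrac12\right)\kappa$. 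In short: right strategy, but the lemma you isolated as the key estimate is exactly the statement that becomes false once the kernel localisation is decoupled from the $t$-integration and the spectral smoothing.
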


The authors actually proved the result for a more general class of operators, including the Laplacian on differential forms, with some additional assumptions on the curvature tensor in that case.  To apply Theorem \ref{th:chlu1} in our setting it suffices to demonstrate that our manifolds have a finite exponential rate of volume growth, which we prove below.
\begin{prop} \label{prop:vol_v2} Let $M$ be a conformally compact manifold of dimension $n+1$.  Then, the Ricci curvature of $M$ is bounded below, and the exponential rate of volume growth is at most $n \alpha_1$.  Here, 
\[ \alpha_1 := \max_{\pa M} |d\rho|.\] 
\end{prop}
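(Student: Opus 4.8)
The plan is to establish the two assertions separately: first that the Ricci curvature is bounded below, and then that the exponential rate of volume growth is at most $n\alpha_1$. The curvature bound is the easy part and also provides the hypothesis needed to invoke Theorem \ref{th:chlu1}. By the standard curvature asymptotics for conformally compact metrics recalled in the introduction, in the collar coordinates of \eqref{eq:special_g} the sectional curvatures of $g$ converge uniformly to $-\alpha(y)^2 \in [-\alpha_1^2,-\alpha_0^2]$ as $x\to 0$, while on the compact set $K$ the metric $g$ is smooth and hence has bounded curvature. Thus the sectional curvatures of $g$ are uniformly bounded on all of $M$, and in particular there is a constant $C_0>0$ with $\operatorname{Ric}\geq -C_0\,g$.

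For the volume growth I would argue directly from the collar model. Since $\det g = \det h/(\alpha^2 x^{2n+2})$, the volume element on $M\setminus K$ is $x^{-n-1}\sqrt{h}\,\alpha^{-1}\,dx\,dy$ up to $O(x^\infty)$, exactly as in the norm computations of \S\ref{s:construction}. Two elementary facts drive the estimate. First, the function $t:=-\log x$ is almost $\alpha_1$-Lipschitz with respect to $g$: from \eqref{eq:special_g} one has $|\n t|_g^2 = g^{xx}x^{-2} = \alpha(y)^2 + O(x^\infty)\leq \alpha_1^2+\epsilon$ in the collar, so $|t(p)-t(q)|\leq(\alpha_1+\epsilon)\,d(p,q)$ and a ball $B_p(R)$ can only reach down to $x\gtrsim x_p\,e^{-(\alpha_1+\epsilon)R}$. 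Second, integrating the volume element gives $\Vol(\{x\geq x_0\})\lesssim x_0^{-n}$. Combining these, for a fixed base point $o$ in the compact core one obtains $\Vol(B_o(R))\lesssim e^{n(\alpha_1+\epsilon)R}$, which already yields the rate $n\alpha_1$ for balls centered in the core as $\epsilon\to 0$.

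The main obstacle is to upgrade this to the \emph{uniform} statement of Definition \ref{def:volgrow}, i.e.\ to bound $\Vol(B_p(R))/\Vol(B_p(1))$ by $C(\epsilon)e^{(n\alpha_1+\epsilon)R}$ for every base point $p$, including $p$ arbitrarily close to $\pa M$. One half is a uniform non-collapsing bound $\Vol(B_p(1))\geq c_0>0$: for $p=(x_p,y_p)$ the unit ball contains a coordinate box on which $x\approx x_p$ and $y$ ranges over an $h$-ball of radius $\approx x_p$, of volume $\approx x_p^{-n}\cdot x_p^{n}$, a constant independent of $x_p$. The genuinely delicate half is the matching upper bound: simply including $B_p(R)$ in the slab $\{x\geq x_p e^{-(\alpha_1+\epsilon)R}\}$ is far too lossy for $p$ near the boundary, since $B_p(R)$ is really a thin sliver that does not spread across all of $Y$. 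A global application of Bishop--Gromov is also insufficient, because it only sees the global bound $\operatorname{Ric}\geq -C_0\,g$ and hence produces a rate governed by $C_0$, which may be strictly larger than $n\alpha_1$.

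To obtain the sharp uniform rate I would instead use a localized comparison. The set $K_\epsilon=\{x\geq x_\epsilon\}$ where the sectional curvature can be more negative than $-(\alpha_1^2+\epsilon)$ is compact, since outside it the sectional curvatures lie within $\epsilon$ of $-\alpha(y)^2\geq -\alpha_1^2$. The Riccati comparison for the area $A_p(t)$ of geodesic spheres then gives $\tfrac{d}{dt}\log A_p(t)\leq n(\alpha_1+\epsilon)$ along any geodesic stretch lying outside $K_\epsilon$, while the bounded stretch of each minimizing geodesic inside the compact set $K_\epsilon$ contributes only a multiplicative constant. Integrating in $t$ and combining with the non-collapsing bound yields $\Vol(B_p(R))\leq C(\epsilon)\Vol(B_p(1))e^{(n\alpha_1+\epsilon)R}$ uniformly in $p$, whence $\kappa\leq n\alpha_1$. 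Controlling the compact contribution $K_\epsilon$ uniformly in $p$—so that it never corrupts the asymptotic rate—is the crux of the argument.
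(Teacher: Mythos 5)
Your proposal is correct and takes essentially the same route as the paper: the paper likewise gets the Ricci lower bound from the curvature asymptotics, arranges the sectional curvature to be bounded below by $-(\alpha_1+\varepsilon)^2$ outside a compact set $K$, and then applies the localized comparison argument of Sturm \cite{sturm93} (Proposition 1, pp.~450--452), in which a Sturm--Liouville/Bishop-type comparison with a piecewise-constant curvature bound --- ``good/bad/good'' in the radial variable, the bad interval having length $\operatorname{diam}(K)$ independent of the base point --- shows that the compact region contributes only the multiplicative constant $C(\varepsilon)$; that citation is precisely what resolves the ``crux'' you flag at the end. The one technical inaccuracy is your pointwise claim $\tfrac{d}{dt}\log A_p(t)\le n(\alpha_1+\epsilon)$ along stretches outside $K_\epsilon$, which is not literally true (even in constant curvature $-k^2$ the mean curvature of geodesic spheres is $nk\coth(kt)>nk$, and it can be large just after exiting the bad region); what the comparison actually yields is that the excess over $n(\alpha_1+\epsilon)$ has uniformly bounded integral, and it is exactly Sturm's ODE analysis that makes this, and hence the rate $n\alpha_1$, uniform in the base point.
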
 
\begin{proof}
We follow the proof of \cite[Proposition 1]{sturm93}.  Fix $\varepsilon > 0$. By possibly increasing the size of the compact set $K \subset M$ in \eqref{eq:special_g}, assume that the sectional curvatures on $M \setminus K$ are bounded below by $-(\alpha_1 + \varepsilon)^2$.  This is possible because for points tending to $\pa M$, the sectional curvatures tend to $-\alpha^2 (y) \in [-\alpha_1^2, -\alpha_0^2]$.  Since the dimension of $M$ is $n+1$, we therefore have that Ricci curvature is bounded below by $-n(\alpha_1+\varepsilon)^2$ on $M \setminus K$.  

Let $R$ be the diameter of $K$.  For a point $p_o \in M$, let $s=s_\varepsilon(p_o) = d(p_o, K)$, and $t=t_\varepsilon (p_o) = s+R$.  As observed in \cite{sturm93}, $t-s$ does not depend on $p_o \in M$.  Since the Ricci curvature is uniformly bounded from below on $M\setminus K$, it is uniformly bounded from below on all of $M$.  So, there is some $\gamma > 0$ such that the Ricci curvature on $M$ is bounded from below by $-n \gamma^2$.  

As in \cite{sturm93}, we introduce the Sturm-Liouville equation on $\R$, 
\beq \label{eq:slp1} u'' + q u = 0, \, u(0) = 0,\, u'(0) = 1, \eeq 
with 
\[ q(r) = \begin{cases} - (\alpha_1+\varepsilon)^2, & \textrm{ for } r \in [0, s), \\ - \gamma^2, & \textrm{ for } r \in [s, t), \\ - (\alpha_1 + \varepsilon)^2, & \textrm{ for } r \in [t, \infty).  \end{cases} \]
By following the same proof as in \cite[p. 450--452]{sturm93}, which relies on the properties of the Sturm Liouville equation as in Bishop's comparison theorem, we get the upper volume estimate 
\[ \Vol(B_{p_o} (r)) \leq C(\varepsilon) \Vol(B_{p_o} (1)) e^{(\alpha_1+\varepsilon)r}\]
for a uniform constant $C(\varepsilon)$ which is independent of $p_o$. Since we are able to obtain such an estimate for any $\varepsilon > 0$, it follows that the exponential rate of volume growth of $M$ is less than or equal to $n \alpha_1$. 
\end{proof}

As a consequence, the exponential rate of volume growth of a conformally compact manifold is at most $n \alpha_1$, so in particular, it is finite.

\begin{cor} \label{cor:taylor} Let $M$ be a conformally compact manifold of dimension $n+1$.  Let $\alpha_0$ denote the minimum value of $|d\rho|$ over the boundary of $M$ and $\lambda_1$ the bottom of the $\cL^2(M)$ spectrum of the Laplacian.  Let $\kappa$ denote the exponential rate of volume growth of $M$ as in Definition \ref{def:volgrow}. Then the region
\beq
\left\{ \lambda_1  +z^2  : \, |{\rm Im}(z)|>\frac{\kappa}{2} \right\}  \subset \C \label{eq:L1resv1}
\eeq
is contained in the resolvent set of the Laplace operator acting on $\cL^1(M)$.  Moreover, 
\beq \left \{ \lambda_1 + z^2 : |\Ima(z)| > \left| \frac 1 p - \frac 1 2 \right| \kappa \right\}  \subset \C \label{eq:Lpresv1} \eeq 
is contained in the resolvent set of the Laplace operator acting on $\cL^p(M)$ for $ p\geq 1$. In general $0<\lambda_1 \leq \frac{n^2 \alpha_0^2}{4}$, and whenever the Laplacian on $\cL^2(M)$ has no  isolated eigenvalues, then  $\lambda_1=\frac{n^2 \alpha_0^2}{4}$.
\end{cor}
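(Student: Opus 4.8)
The plan is to deduce both inclusions directly from Theorem \ref{th:chlu1}, once its hypotheses are checked and the bottom of the $\cL^2$ spectrum is identified. First I would apply Proposition \ref{prop:vol_v2}, which shows simultaneously that the Ricci curvature of $M$ is bounded below and that the exponential rate of volume growth $\kappa$ is finite (in fact $\kappa \leq n\alpha_1$). These are exactly the standing assumptions of Theorem \ref{th:chlu1}, so that result applies to $M$ with $H = \Delta - \lambda_1$, where $\lambda_1$ denotes the infimum of the $\cL^2$ spectrum of $\Delta$. The one algebraic manipulation needed is the identity $H - z^2 = \Delta - (\lambda_1 + z^2)$: boundedness of $(H-z^2)^{-1}$ on $\cL^1(M)$ means precisely that $\lambda_1 + z^2$ lies in the resolvent set of $\Delta$ on $\cL^1(M)$. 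Thus Theorem \ref{th:chlu1} yields that $\lambda_1 + z^2$ belongs to the resolvent set on $\cL^1(M)$ whenever $|{\rm Im}(z)| > \kappa/2$, and to the resolvent set on $\cL^p(M)$ whenever $|{\rm Im}(z)| > |\tfrac1p - \tfrac12|\kappa$, for every $p \geq 1$.

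The substantive step, which I expect to be the main obstacle, is to show that $\lambda_1 = n^2\alpha_0^2/4$. The upper bound $\lambda_1 \leq n^2\alpha_0^2/4$ is already in hand from Section \ref{s:construction}: specializing the construction there to $p=2$ and $A = \alpha_0^2 \in \alpha^2(Y)$ produces approximate eigenfunctions whose spectral parameters $\alpha_0^2(n^2/4 + s^2)$ exhaust the ray $[n^2\alpha_0^2/4, \infty)$, so this ray lies in the $\cL^2$ spectrum and in particular $\lambda_1 \leq n^2\alpha_0^2/4$. For the reverse inequality I would invoke Mazzeo's spectral analysis \cite{mazzeo_88}, by which the essential spectrum of the $\cL^2$ Laplacian is $[n^2\alpha_0^2/4, \infty)$; the delicate point is to conclude that the infimum of the whole spectrum coincides with the bottom of the essential spectrum, i.e. that there are no $\cL^2$ eigenvalues strictly below $n^2\alpha_0^2/4$. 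Granting this, $\lambda_1 = n^2\alpha_0^2/4$.

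With the equality $\lambda_1 = n^2\alpha_0^2/4$ in place, I would simply substitute it into the two inclusions of the first paragraph. This converts the set $\{\lambda_1 + z^2 : |{\rm Im}(z)| > \kappa/2\}$ into the region \eqref{eq:L1resv1} lying in the $\cL^1$ resolvent set, and the set $\{\lambda_1 + z^2 : |{\rm Im}(z)| > |\tfrac1p - \tfrac12|\kappa\}$ into the region \eqref{eq:Lpresv1} lying in the $\cL^p$ resolvent set, which is exactly the assertion of the corollary. Apart from verifying the hypotheses via Proposition \ref{prop:vol_v2}, the entire content beyond Theorem \ref{th:chlu1} is the identification of $\lambda_1$; and within that, the only genuinely nontrivial ingredient is the lower bound $\lambda_1 \geq n^2\alpha_0^2/4$, namely the absence of $\cL^2$ bound states below the continuous spectrum.
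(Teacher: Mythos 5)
Your proposal follows essentially the same route as the paper: the paper's entire proof is the two-step reduction you describe, namely citing Proposition \ref{prop:vol_v2} to verify the hypotheses of Theorem \ref{th:chlu1} (Ricci bounded below, $\kappa$ finite) and then invoking \cite{mazzeo_88} to identify $\lambda_1 = n^2\alpha_0^2/4$, after which the two regions are exactly the sets $\{\lambda_1+z^2\}$ supplied by that theorem. The ``delicate point'' you flag but leave granted --- that there are no $\cL^2$ eigenvalues strictly below $n^2\alpha_0^2/4$, so that the infimum of the full spectrum agrees with the bottom of the essential spectrum --- is precisely the point the paper disposes of with its bare citation of Mazzeo, so your attempt is complete to the same degree as the paper's own proof; indeed your caution is warranted, since Mazzeo's theorem as quoted in the introduction identifies only the essential spectrum, and ruling out discrete spectrum below it is a genuinely separate assertion.
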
 
\begin{proof} 
By Proposition \ref{prop:vol_v2}, $M$ satisfies the hypotheses of Theorem \ref{th:chlu1}. By \cite{mazzeo_88}, the infimum of the essential spectrum of the Laplacian on $\cL^2(M)$ is $\frac{n^2 \alpha_0^2}{4}$. Therefore the spectrum of the Laplacian on $\cL^2$ is $[\frac{n^2 \alpha_0^2}{4},\infty)$ possibly with a finite set of isolated eigenvalues $\{\lambda_1, \ldots, \lambda_m\}$ with $0 < \lambda_m < \frac{n^2 \alpha_0^2}{4}$.   The corollary now follows.
\end{proof}

With the corollary above, we shall finish the proof of Theorems \ref{th1} and \ref{th2} and establish the precise value of the volume growth in Corollary \ref{cor2}. 

\begin{proof}[Completion of proofs of Theorems \ref{th1} and \ref{th2} and Corollary \ref{cor2}]

Let $z=t+is$.  Since $z^2=(-z)^2$, the set \eqref{eq:L1resv1} may also be described as 
\[ \left\{  \lambda_1  + t^2 - s^2 + 2its : s^2 > \frac {\kappa^2}{4}, \, t \in \R \right\}.\]
Let us call $y=2ts$. Replacing $t$,  the set \eqref{eq:L1resv1} may also be described
\[ \left\{ \lambda_1 + \frac{y^2}{4s^2} - s^2 + i y: y \in \R, \, s^2 > \frac {\kappa^2} 4 \right\}\]
\beq = \left\{ x + i y : x < \lambda_1  + \frac{y^2}{\kappa^2} - \frac{\kappa^2}{4}, \, y \in \R \right\}. \label{eq:l1_res} \eeq

Consequently, since the set \eqref{eq:l1_res} is contained within the resolvent set, the $\cL^1$ spectrum is contained within the complement of that region and is therefore contained within the parabolic region 
\beq  \left\{ x + i y : x \geq \lambda_1 + \frac{y^2}{\kappa^2} - \frac{\kappa^2}{4} , \, y \in \R \right\}. \label{eq:Aparabola1_rv2} \eeq

Combining our results for the $\cL^1$ spectrum allows us to precisely specify the exponential rate of volume growth of a conformally compact manifold as stated in Corollary \ref{cor2}.  The parabolic region 
\begin{equation} \label{eq:pfcor1} \left\{ z = x+iy: x \geq \frac{y^2}{\alpha_1^2 n^2}, \, y \in \R \right\} \end{equation} 
    is contained in the $\cL^1$ spectrum. By Proposition \ref{prop:vol_v2}, $\kappa \leq n \alpha_1$. If $\kappa<\alpha_1 n$ this gives rise to a contradiction because the parabolic region \eqref{eq:Aparabola1_rv2} that \em contains \em the $\cL^1$ spectrum is strictly \em narrower \em than the parabolic region \eqref{eq:pfcor1} contained within the $\cL^1$ spectrum.  Consequently, the exponential volume growth parameter $\kappa \geq \alpha_1 n$.  By Proposition \ref{prop:vol_v2}, we therefore obtain the equality $\kappa = n \alpha_1$. 
An example of the parabolic region that contains the $\cL^1$ spectrum together with the parabolic region that is contained within the $\cL^1$ spectrum is  depicted in \ref{fig:L1spec_res}.

\begin{figure}[h!] 
\begin{center} 
\includegraphics[width=0.6\textwidth]{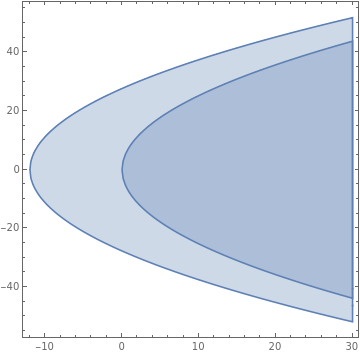}
\caption{The larger blue shaded region contains the $\cL^1$ spectrum, and the smaller shaded region is contained in the $\cL^1$ spectrum.  In this example, the dimension is 5 ($n=4$), $\alpha_0=1$, $\alpha_1=2$, and there are no isolated $\cL^2$ eigenvalues. }
\label{fig:L1spec_res} 
\end{center} 
\end{figure}

Next, assume that $1<p \leq 2$.  Let $z=t+is$, and note that that $\alpha_1>0$ so that $s > 0$.  Then setting $\kappa = n \alpha_1$ the set \eqref{eq:Lpresv1} is 

\[ \left\{\lambda_1  + t^2 - s^2 +2its : t \in \R, \, s > \frac {n \alpha_1} 2 \left( \frac 2 p - 1 \right) \right\}.\]
Setting $y=2ts$ and observing that $s>0$, this set is 
\beq \label{eq:lp_res} & \displaystyle{ \left \{ \lambda_1 + \frac{y^2}{4s^2} - s^2 + i y: y \in \R, s > \frac {n \alpha_1} 2 \left( \frac 2 p - 1 \right) \right\}  }\\ 
&= \displaystyle{ \left \{ x + i y : x < \lambda_1 + \frac{y^2}{n^2 \alpha_1 ^2 \left( \frac 2 p - 1\right)^2} - \frac{ \left( \frac 2 p - 1 \right)^2 n^2 \alpha_1^2}{4} , y \in \R \right\}. } \nn \eeq 
The $\cL^p$ spectrum is therefore contained in the complement of \eqref{eq:lp_res}, which is the parabolic region given in Theorem \ref{th2}.  An example of the parabolic region that contains the $\cL^p$ spectrum together with the one-parameter family of parabolic regions that are contained within the $\cL^p$ spectrum are depicted in Figure \ref{fig:Lpspec_res}. Note that for $p\geq 2$, the parabolas given by the dual dimension  $q$ corresponding to $1/q=1-1/p$ are exactly the same, hence the result extends to all $p \geq 1$.

\begin{figure}[h!] 
\begin{center} 
\includegraphics[width=0.8\textwidth]{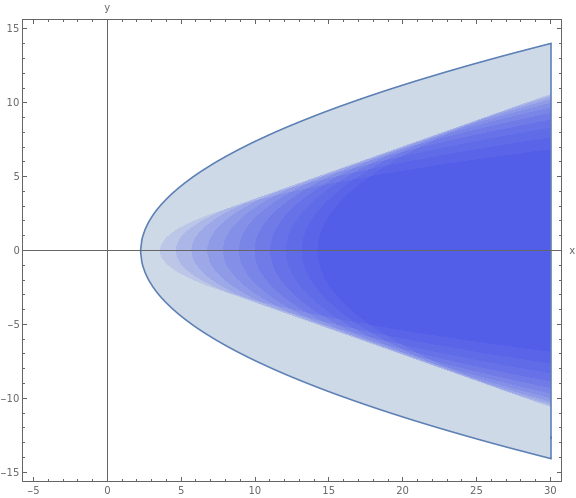}
\caption{The larger blue shaded region conatins the $\cL^p$ spectrum, and the smaller shaded region is contained in the $\cL^p$ spectrum.  In this example, $p=3/2$, the dimension is 5 ($n=4$), $\alpha_0=1$, and $\alpha_1=2$.} 
\label{fig:Lpspec_res} 
\end{center} 
\end{figure} 

\end{proof}




\begin{remark}
We would like to remark that we could have also found the parabola corresponding to the resolvent set using a similar theorem due to Taylor in \cite{taylor89}, instead of Theorem \ref{th:chlu1}.  Taylor considered manifolds with injectivity radius uniformly bounded below  and with bounded geometry, which satisfy the volume estimate
\begin{equation}  \label{eq:taylor_vol}{\rm Vol}(B_x(R)) \leq  C(1+R^2)^{1/(2\mu)} \,e^{\kappa R}, \quad \forall R>0, 
\end{equation} 
where is $\mu$ and $\kappa$ are non-negative constants, and $C$ is uniform.  Under the above assumptions and for $p \geq 1$, he showed that $(H-z^2)^{-1}$ is a bounded operator on $\cL^p(M)$, whenever  $|{\rm Im}(z)|>\left|\tfrac 1p - \tfrac 12\right|\kappa$. Here $H=\Delta -\lambda_1$ as in our case. 

Since conformally compact manifolds have Ricci curvature bounded below, the volume of balls of radius 1 is uniformly bounded above, hence  \eqref{eq:taylor_vol} immediately follows from Proposition \ref{prop:vol_v2}. To obtain the strict positive lower bound on the injectivity radius, it suffices to show that the volume of balls of radius $1$ has a uniform positive lower bound. This is true given the structure of our manifolds at infinity and the fact that they have infinite volume. It then follows by \cite[Theorem 4.7]{CGT} that the injectivity radius is uniformly bounded below by a positive constant.  Since Theorem \ref{th:chlu1} is a somewhat more general result, and with the intent to expand this work to a larger class of manifolds and operators, we chose to use it instead.
\end{remark}


\section{ Eigenvalues} \label{s:discrete} 
In this section we investigate the set of eigenvalues in the $\cL^p$ spectrum.  We show that for all $p>2$, the eigenvalues of the Laplace operator acting on $\cL^2$ are also eigenvalues of the Laplace operator acting on $\cL^p$.  Moreover, we show that every eigenvalue of the Laplace operator acting on $\cL^p$ for $1 \leq p < 2$ is an $\cL^2$ eigenvalue as well.  
To obtain these results we use the following fact about the heat operator.

\begin{lem} \label{le:heat}
The heat operator over a conformally compact manifold is bounded from $\cL^p(M)$ to $\cL^q(M)$ for any $1\leq p \leq q \leq \infty.$
\end{lem}
\begin{proof}
Since the Ricci curvature of a conformally compact manifold is uniformly bounded below, the estimates of Saloff-Coste \cite{SaCo}, give that the heat kernel $h_t(x,y)$ of the Laplacian on functions is bounded above by  
\[
h_t(x,y) \leq C  \,\Vol(B_x (\sqrt t))^{-1/2}\Vol(B_y (\sqrt t))^{-1/2} e^{\sqrt{K t}}   \; e^{-\frac{d^2(x,y)}{C' t}}. 
\]
Here $C$ and  $C'$ only depend on $n$, while $K$ depends on the Ricci curvature lower bound. Moreover, by Bishop's volume comparison theorem there exists a uniform constant $C$ independent of $x$ and  $r>0$ such that
\[
\frac{\Vol(B_x (1))}{\Vol(B_x (r))} \leq C \max\{1, r^{-(n+1)} \} .
\]
Since the the volume of balls of radius 1 is uniformly bounded below, as we have previously discussed, it follows that 
\[
h_t(x,y) \leq C  \max\{1, t^{-(n+1)} e^{\sqrt{K t}} \} =a_t. 
\]
Here $C$ only depends on $K$ and $n$. By \cite[Lemma 2.1.2]{davies89} we conclude that the heat operator is ultracontractive, it is in other words bounded from $\cL^2(M)$ to $\cL^\infty(M)$ with norm $c_t=a_t^{1/2}$. By taking adjoints, this also implies that the heat operator is bounded from $\cL^1$ to $\cL^2$, and by using repeated interpolation we get that it is bounded from $\cL^p$ to $\cL^q$ for any $1\leq p \leq q \leq \infty,$ since it is always bounded on $\cL^p$.  
\end{proof}


\begin{proof}[Proof of Theorem \ref{th:discrete}]
Suppose that $\lambda_i$ is an eigenvalue of the Laplacian on $\cL^2(M)$ with corresponding eigenfunction $\phi_i \in \cL^2(M)$. The heat operator preserves eigenfunctions, in other words 
\[
e^{-t\Delta} \phi_i = e^{-t\lambda_i} \phi_i
\]
for all $t$. Since the heat operator is ultracontractive we have 
\[
\|e^{-t\lambda_i} \phi_i \|_{\infty} =\|e^{-t\Delta} \phi_i\|_{\infty} \leq c_t \|\phi_i\|_2
\]
for all $t>0$.  Taking $t=1/\lambda_i$ if $\lambda_i>0$, and $t=1$ otherwise, we get
\[
\|\phi_i \|_{\infty} \leq C \|\phi_i\|_2.
\]
In other words, any $\cL^2$ eigenfunction must also be bounded, and hence by interpolation it must also belong to $\cL^p$ for any $2\leq p \leq \infty$. In other words, any eigenvalue in the $\cL^2$-spectrum, including the discrete isolated eigenvalues, must also be an eigenvalue for the $\cL^p$ spectrum for all $p>2$.

Similarly, if $\lambda_i$ is an $\cL^p$-eigenvalue of the Laplacian for some $1\leq p<2,$ then it must also be an $\cL^2$-eigenvalue. As a result, the $\cL^p$ eigenvalues for $1\leq p<2$ can only be points on the real line, and contained in the $\cL^2$ spectrum.
\end{proof}

Theorem \ref{th:discrete} reflects the absence of duality between the $\cL^p$  and the $\cL^q$ spectra for $\frac 1q =1 -\frac 1p$ also observed by \cite{DST,JW1,JW}. We further observe that the analytic results for the $\cL^p$ spectrum do not allow one to conclude that if a point in the complex plane is an isolated point of the $\cL^p$ spectrum for $p\neq2$ then it must be an eigenvalue.

\section{Concluding Remarks} \label{s:conclude}
We have shown in Theorem \ref{th1} that the Laplacian on a conformally compact manifold acting on $\cL^1(M)$ contains a parabolic region of the complex plane and is also contained in a parabolic region of the complex plane.  This is depicted in Figure \ref{fig:L1spec_res}.  If the manifold is also asymptotically hyperbolic, and there are no isolated $\cL^2$ eigenvalues, then our result is sharp; we have precisely calculated the $\cL^1$ spectrum.  However, if the sectional curvatures are variable up to the boundary, then the two parabolic regions are not identical.  This may reflect the difficulties caused by variable curvature at infinity Borthwick faced when studying the resolvent kernel acting on $\cL^2$ \cite{borthwick}.  It is not immediately clear whether we may be able to completely determine the $\cL^p$ spectrum in the case of variable curvature, but this will be interesting to study further.  Moreover, our results show that for a very large class of manifolds that include many interesting examples, the $\cL^p$ spectrum \em depends on $p$.  \em  Since different $\cL^p$ spaces may be relevant to certain specific physical processes like diffusion, this shows that the value of $p$ may crucially affect such processes on conformally compact manifolds.  In particular, when the conformally compact manifold is used as a model in general relativity, this phenomenon is important to bear in mind.  Although it may be known by experts, it is interesting that we were able to combine geometric techniques with results for the $\cL^1$ spectrum of the Laplace operator in order to obtain the precise exponential rate of volume growth of any conformally compact manifold.  Perhaps in other contexts, one may similarly be able to obtain information about volume growth estimates through the study of the Laplace spectrum.

\section*{Acknowledgements}
We are grateful to the anonymous referees for constructive critiques that improved the quality of the manuscript and to Zhiqin Lu for keen observations. JR is grateful to Ksenia Fedosova for help producing the figures and to Eric Bahuaud, Klaus Kr\"oncke, and Raquel Perales for fruitful discussions. NC was partially supported by a University of Cyprus Internal Grant and JR was supported by the Swedish Research Council grant 2018-03873 while this work was in progress.

\begin{bibdiv}
\begin{biblist}

\bib{arendt94}{article}{
   author={Arendt, Wolfgang},
   title={Gaussian estimates and interpolation of the spectrum in $L^p$},
   journal={Differential Integral Equations},
   volume={7},
   date={1994},
   number={5-6},
   pages={1153--1168},
   issn={0893-4983},
   review={\MR{1269649}},
}

\bib{borthwick}{article}{
   author={Borthwick, David},
   title={Scattering theory for conformally compact metrics with variable
   curvature at infinity},
   journal={J. Funct. Anal.},
   volume={184},
   date={2001},
   number={2},
   pages={313--376},
   issn={0022-1236},
   review={\MR{1851001}},
   doi={10.1006/jfan.2001.3770},
}

\bib{Char}{article}{
   author={Charalambous, Nelia},
   title={On the $L^p$ independence of the spectrum of the Hodge
   Laplacian on non-compact manifolds},
   journal={J. Funct. Anal.},
   volume={224},
   date={2005},
   number={1},
   pages={22--48},
   issn={0022-1236},
   review={\MR{2139103 (2006e:58044)}},
   doi={10.1016/j.jfa.2004.11.003},
}

\bib{ChGr}{article}{
   author={Charalambous, Nelia},
   author={Gro\ss e, Nadine},
   title={On the $L^p$ Spectrum of the Dirac Operator},
   journal={J. Geom. Anal.},
   volume={33},
   date={2023},
   number={2},
   pages={44},
   issn={1050-6926},
   review={\MR{4523513}},
   doi={10.1007/s12220-022-01102-y},
}

\bib{ChLu1}{unpublished}{
   author={Charalambous, Nelia},
   author={Lu, Zhiqin},
   title={The spectrum of negatively curved manifolds},
   note={In preparation}
   }

\bib{ChLu2}{article}{
   author={Charalambous, Nelia},
   author={Lu, Zhiqin},
   title={Heat kernel estimates and the essential spectrum on weighted
   manifolds},
   journal={J. Geom. Anal.},
   volume={25},
   date={2015},
   number={1},
   pages={536--563},
   issn={1050-6926},
}

\bib{CGT}{article}{
   author={Cheeger, Jeff},
   author={Gromov, Mikhail},
   author={Taylor, Michael},
   title={Finite propagation speed, kernel estimates for functions of the
   Laplace operator, and the geometry of complete Riemannian manifolds},
   journal={J. Differential Geometry},
   volume={17},
   date={1982},
   number={1},
   pages={15--53},
   issn={0022-040X},
   review={\MR{658471}},
}

\bib{davies89}{book}{
   author={Davies, E. B.},
   title={Heat kernels and spectral theory},
   series={Cambridge Tracts in Mathematics},
   volume={92},
   publisher={Cambridge University Press, Cambridge},
   date={1989},
   pages={x+197},
   isbn={0-521-36136-2},
   review={\MR{990239}},
   doi={10.1017/CBO9780511566158},
}

   \bib{davies95}{article}{
   author={Davies, E. B.},
   title={$L^p$ spectral independence and $L^1$ analyticity},
   journal={J. London Math. Soc. (2)},
   volume={52},
   date={1995},
   number={1},
   pages={177--184},
   issn={0024-6107},
   review={\MR{1345724}},
   doi={10.1112/jlms/52.1.177},
}

\bib{davies97}{article}{
   author={Davies, E. B.},
   title={$L^p$ spectral theory of higher-order elliptic differential
   operators},
   journal={Bull. London Math. Soc.},
   volume={29},
   date={1997},
   number={5},
   pages={513--546},
   issn={0024-6093},
   review={\MR{1458713}},
   doi={10.1112/S002460939700324X},
}

\bib{DST}{article}{
   author={Davies, E. B.},
   author={Simon, Barry},
   author={Taylor, Michael},
   title={$L^p$ spectral theory of Kleinian groups},
   journal={J. Funct. Anal.},
   volume={78},
   date={1988},
   number={1},
   pages={116--136},
   issn={0022-1236},
}

\bib{emm_91}{article}{
   author={Epstein, C. L.},
   author={Melrose, R. B.},
   author={Mendoza, G. A.},
   title={Resolvent of the Laplacian on strictly pseudoconvex domains},
   journal={Acta Math.},
   volume={167},
   date={1991},
   number={1-2},
   pages={1--106},
   issn={0001-5962},
   review={\MR{1111745}},
   doi={10.1007/BF02392446},
}

\bib{feff_gram_85}{article}{
   author={Fefferman, Charles},
   author={Graham, C. Robin},
   title={Conformal invariants},
   note={The mathematical heritage of \'{E}lie Cartan (Lyon, 1984)},
   journal={Ast\'{e}risque},
   date={1985},
   number={Num\'{e}ro Hors S\'{e}rie},
   pages={95--116},
   issn={0303-1179},
   review={\MR{837196}},
}

   \bib{hawking_83}{article}{
   author={Hawking, S. W.},
   author={Page, Don N.},
   title={Thermodynamics of black holes in anti-de Sitter space},
   journal={Comm. Math. Phys.},
   volume={87},
   date={1982/83},
   number={4},
   pages={577--588},
   issn={0010-3616},
   review={\MR{691045}},
}

\bib{HV1}{article}{
   author={Hempel, Rainer},
   author={Voigt, J{\"u}rgen},
   title={The spectrum of a Schr\"odinger operator in $L_p({\bf R}^\nu)$ is $p$-independent},
   journal={Comm. Math. Phys.},
   volume={104},
   date={1986},
   number={2},
   pages={243--250},
   issn={0010-3616},
   review={\MR{836002 (87h:35247)}},
}

\bib{JW1}{article}{
   author={Ji, Lizhen},
   author={Weber, Andreas},
   title={$L^p$ spectral theory and heat dynamics of locally symmetric
   spaces},
   journal={J. Funct. Anal.},
   volume={258},
   date={2010},
   number={4},
   pages={1121--1139},
   issn={0022-1236},
   review={\MR{2565835}},
   doi={10.1016/j.jfa.2009.11.011},
}

\bib{JW}{article}{
   author={Ji, Lizhen},
   author={Weber, Andreas},
   title={The $\cL^p$ spectrum and heat dynamics of locally symmetric spaces
   of higher rank},
   journal={Ergodic Theory Dynam. Systems},
   volume={35},
   date={2015},
   number={5},
   pages={1524--1545},
   issn={0143-3857},
   review={\MR{3365732}},
   doi={10.1017/etds.2014.3},
}

\bib{kunstmann99}{article}{
   author={Kunstmann, Peer Christian},
   title={Heat kernel estimates and $L^p$ spectral independence of elliptic
   operators},
   journal={Bull. London Math. Soc.},
   volume={31},
   date={1999},
   number={3},
   pages={345--353},
   issn={0024-6093},
   review={\MR{1673414}},
   doi={10.1112/S0024609398005530},
}

\bib{Kordu}{article}{
 Author = {Yu. A. {Kordyukov}},
 Title = {{\(L^ p\)-theory of elliptic differential operators on manifolds of bounded geometry}},
 Journal = {{Acta Appl. Math.}},
 Volume = {23},
 Number = {3},
 Pages = {223--260},
 Year = {1991},
 }

\bib{mazzeo_88}{article}{
   author={Mazzeo, Rafe},
   title={The Hodge cohomology of a conformally compact metric},
   journal={J. Differential Geom.},
   volume={28},
   date={1988},
   number={2},
   pages={309--339},
   issn={0022-040X},
   review={\MR{961517}},
}

\bib{mm_87}{article}{
   author={Mazzeo, Rafe R.},
   author={Melrose, Richard B.},
   title={Meromorphic extension of the resolvent on complete spaces with
   asymptotically constant negative curvature},
   journal={J. Funct. Anal.},
   volume={75},
   date={1987},
   number={2},
   pages={260--310},
   issn={0022-1236},
   review={\MR{0916753}},
   doi={10.1016/0022-1236(87)90097-8},
}

\bib{mazzeo_vasy_05}{article}{
   author={Mazzeo, Rafe},
   author={Vasy, Andr\'{a}s},
   title={Analytic continuation of the resolvent of the Laplacian on
   symmetric spaces of noncompact type},
   journal={J. Funct. Anal.},
   volume={228},
   date={2005},
   number={2},
   pages={311--368},
   issn={0022-1236},
   review={\MR{2175410}},
   doi={10.1016/j.jfa.2004.10.003},
}

\bib{penrose_63}{article}{
   author={Penrose, Roger},
   title={Asymptotic properties of fields and space-times},
   journal={Phys. Rev. Lett.},
   volume={10},
   date={1963},
   pages={66--68},
   issn={0031-9007},
   review={\MR{149912}},
   doi={10.1103/PhysRevLett.10.66},
}

\bib{penrose_65}{article}{
   author={Penrose, R.},
   title={Zero rest-mass fields including gravitation: Asymptotic behaviour},
   journal={Proc. Roy. Soc. London Ser. A},
   volume={284},
   date={1965},
   pages={159--203},
   issn={0962-8444},
   review={\MR{175590}},
   doi={10.1098/rspa.1965.0058},
}

\bib{roelcke_60}{article}{
   author={Roelcke, Walter},
   title={\"{U}ber den Laplace-Operator auf Riemannschen Mannigfaltigkeiten mit
   diskontinuierlichen Gruppen},
   language={German},
   journal={Math. Nachr.},
   volume={21},
   date={1960},
   pages={131--149},
   issn={0025-584X},
   review={\MR{151927}},
   doi={10.1002/mana.19600210303},
}

\bib{sabarreto_wang_16}{article}{
   author={S\'{a} Barreto, Ant\^{o}nio},
   author={Wang, Yiran},
   title={The semiclassical resolvent on conformally compact manifolds with
   variable curvature at infinity},
   journal={Comm. Partial Differential Equations},
   volume={41},
   date={2016},
   number={8},
   pages={1230--1302},
   issn={0360-5302},
   review={\MR{3532393}},
   doi={10.1080/03605302.2016.1190377},
}

\bib{SaCo}{article}{
   author={Saloff-Coste, Laurent},
   title={Uniformly elliptic operators on Riemannian manifolds},
   journal={J. Differential Geom.},
   volume={36},
   date={1992},
   number={2},
   pages={417--450},
   issn={0022-040X},
   review={\MR{1180389 (93m:58122)}},
}

\bib{strichartz_83}{article}{
   author={Strichartz, Robert S.},
   title={Analysis of the Laplacian on the complete Riemannian manifold},
   journal={J. Functional Analysis},
   volume={52},
   date={1983},
   number={1},
   pages={48--79},
   issn={0022-1236},
   review={\MR{705991}},
   doi={10.1016/0022-1236(83)90090-3},
}

\bib{sturm93}{article}{
   author={Sturm, Karl-Theodor},
   title={On the $L^p$-spectrum of uniformly elliptic operators on
   Riemannian manifolds},
   journal={J. Funct. Anal.},
   volume={118},
   date={1993},
   number={2},
   pages={442--453},
   issn={0022-1236},
   review={\MR{1250269}},
   doi={10.1006/jfan.1993.1150},
}

\bib{taylor89}{article}{
   author={Taylor, Michael E.},
   title={$L^p$-estimates on functions of the Laplace operator},
   journal={Duke Math. J.},
   volume={58},
   date={1989},
   number={3},
   pages={773--793},
   issn={0012-7094},
   review={\MR{1016445}},
   doi={10.1215/S0012-7094-89-05836-5},
}

\bib{wang97}{article}{
   author={Wang, Jiaping},
   title={The spectrum of the Laplacian on a manifold of nonnegative Ricci
   curvature},
   journal={Math. Res. Lett.},
   volume={4},
   date={1997},
   number={4},
   pages={473--479},
   issn={1073-2780},
   review={\MR{1470419}},
   doi={10.4310/MRL.1997.v4.n4.a4},
}

\end{biblist}
\end{bibdiv}
\end{document}